\pdfoutput=1
\documentclass[11pt]{article}

\usepackage[T1]{fontenc}
\usepackage[utf8]{inputenc}
\usepackage[a4paper,margin=1in]{geometry}
\usepackage{lmodern}
\usepackage{amsmath,amssymb,amsfonts,mathtools,amsthm}
\usepackage{array,booktabs}
\usepackage{graphicx}
\usepackage{microtype}
\usepackage{xcolor}
\usepackage{cite}
\usepackage[linesnumbered,lined,ruled,commentsnumbered]{algorithm2e}
\usepackage[colorlinks=true,linkcolor=blue,citecolor=blue,urlcolor=blue]{hyperref}

\hypersetup{
  pdftitle={Data-driven feedback rectification of switched linear systems},
  pdfauthor={Philipp Schmitz, Hannes Gernandt, Maria C. Honecker, Karl Worthmann}
}

\allowdisplaybreaks

\newcommand{\R}{\mathbb{R}}
\newcommand{\C}{\mathbb{C}}
\newcommand{\N}{\mathbb{N}}
\DeclareMathOperator{\im}{im}

\theoremstyle{plain}
\newtheorem{theorem}{Theorem}
\newtheorem{lemma}[theorem]{Lemma}
\newtheorem{proposition}[theorem]{Proposition}

\theoremstyle{definition}
\newtheorem{definition}[theorem]{Definition}

\theoremstyle{remark}
\newtheorem{remark}[theorem]{Remark}

\title{Data-driven feedback rectification of switched linear systems}

\author{%
\parbox{0.96\textwidth}{%
\centering
Philipp Schmitz$^{1,*}$, Hannes Gernandt$^{2,3}$,\\
Maria C. Honecker$^{4,5}$, and Karl Worthmann$^{1}$\\[0.75em]
\footnotesize
$^{1}$Optimization-based Control Group, Technische Universit\"at Ilmenau, 98693 Ilmenau, Germany\\
$^{2}$University of Wuppertal, Gau\ss stra\ss e 20, 42119 Wuppertal, Germany\\
$^{3}$Fraunhofer Research Institution for Energy Infrastructures and Geotechnologies IEG,\\ 03046 Cottbus, Germany\\
$^{4}$Chair of Intelligent Control Systems, RWTH Aachen University,\\ Kopernikusstra\ss e 16, 52062 Aachen, Germany\\
$^{5}$cigus GmbH, Magirus-Deutz-Stra\ss e 13, 89077 Ulm, Germany\\[0.5em]
$^{*}$Corresponding author: \href{mailto:philipp.schmitz@tu-ilmenau.de}{philipp.schmitz@tu-ilmenau.de}%
}%
}

\date{\today}

\begin{document}
\maketitle

\begin{abstract}
In this paper, a data-driven method for the computation of stabilizing state-feedbacks is proposed that leads to a rectified eigenstructure of switched linear systems. This means that all switching subsystems have the same sets of eigenvectors and the rectification allows to compute a common quadratic Lyapunov function ensuring asymptotic stability of the closed-loop system. The method is illustrated for two examples including a model of an Aerosonde.
\end{abstract}

\noindent\textbf{Keywords:} switched linear systems, Willems' fundamental lemma, eigenstructure assignment, common quadratic Lyapunov functions, feedback, data-driven control design

\section{Introduction}

Switched systems provide a natural modeling framework for dynamical processes whose evolution is governed by different modes of operation. Such switching phenomena occur in a broad range of applications, including power converters in electrical systems~\cite{trenn2012switched},
aircraft control systems~\cite{WangZS2016}, precision motion systems~\cite{HeertjesN2012}, electro-hydraulic systems~\cite{YuanDSB2017}, and coupled mechanical systems~\cite{Sajja2019} to name but a few. Accordingly, switched systems have been investigated extensively during the last decades, resulting in several monographs and a large body of stability and control-theoretic results, see e.g.\ the textbooks~\cite{Liberzon.2003,zhao2017control} or the survey paper~\cite{lin2009stability}.

A central difficulty in the analysis of switched systems is that stability does, in general, not only depend
on the stability properties of the individual subsystems, but also on the switching signal. In particular, even switching between stable subsystems may lead to instability~\cite{Liberzon.2003}.
This motivates the study of stability under arbitrary switching, that is, stability guarantees which are independent of the particular switching sequence.
A standard tool for establishing such guarantees is the construction of common Lyapunov functions~\cite{mason2023universal}. Among these, common quadratic Lyapunov functions (CQLFs) are especially attractive, since they provide verifiable and computationally tractable certificates for stability under arbitrary switching~\cite{andersen2022common}.

With the development of stability criteria for switched systems, increasing attention has been devoted to feedback design. Existing approaches include methods based on linear matrix inequalities \cite{Sun05} and $H_\infty$-control tools \cite{Deaecto11}
as well as model-reference adaptive control~\cite{YuanDSB2017} and switched model-predictive control~\cite{ZhangZB2016}. Other contributions address feedback designs that guarantee stability under broad classes of switching signals while also aiming at additional performance objectives; examples include composite nonlinear feedback designs for switched nonlinear~\cite{LinPB1998} and networked systems~\cite{li2018improved,zhao2022event}.

A particularly constructive line of work relies on eigenstructure assignment. In contrast to purely numerical design approaches, eigenstructure-based methods are closely related to classical linear control techniques and provide direct insight into the closed-loop dynamics.

For linear time-invariant systems with multiple inputs, assigning the
closed-loop poles by state feedback generally leaves freedom in the associated
closed-loop eigenvectors. Moore's seminal result~\cite{Moore76} characterizes
the admissible eigenvectors for prescribed eigenvalues and thereby makes
explicit the design freedom beyond pole placement. Subsequent parametric
formulations have exploited this freedom, for example, to optimize the
robustness and gain of pole-placing feedbacks, including in the presence of
repeated poles~\cite{Schmid14}, and to connect eigenstructure assignment with
geometric-control constructions~\cite{padula2021eigenstructure}.

Related recent work provides explicit parameterizations of generalized
eigenvector chains, which are needed when the prescribed closed-loop
eigenstructure contains nontrivial Jordan chains
\cite{maynard5761099parameterizing}

For switched linear systems under arbitrary switching, eigenstructure-based
feedback design has been used to impose common structural properties on the
closed-loop modes. The approach in~\cite{WulWirSho09} employs invariant-subspace
constructions to stabilize a class of SISO switched systems, whereas
\cite{Hai16} studies the simultaneous triangularization of the closed-loop
matrices together with arbitrary eigenvalue assignment. The feedback
rectification framework of~\cite{HoneckerGWTR24} instead constructs
mode-dependent feedback matrices such that the closed-loop subsystems share a
common eigenvector basis, applying Moore's eigenstructure-assignment result to
the individual modes.

If the assigned eigenvalues are stable, the common eigenvector basis yields a
CQLF and hence stability under arbitrary switching. At the same time, the
remaining freedom in the eigenvector selection can be exploited for additional
performance objectives, as demonstrated for monotone tracking
in~\cite{HoneckerSWR24} and non-overshooting output shaping
in~\cite{WulffHSR26}.

The existing feedback rectification methods are model-based: they require exact knowledge of the subsystem matrices and involve the computation of basis vectors from intersections of eigenvector-assignment subspaces. While analytical descriptions of these intersections are available in certain cases, for example for two subsystems, their computation becomes increasingly involved as the number of modes grows. This motivates the question whether feedback rectification can be carried out directly from measured data, without first relying on an explicit model-based representation of all subsystems.

Data-driven control has become an active research area, in particular following Willems' fundamental lemma~\cite{Willems2005}, which allows the behavior of controllable linear time-invariant systems to be represented from sufficiently rich input--state or input-output data, see also the review papers~\cite{markovsky2021behavioral,Faulwasser23}.
This perspective has led to numerous data-driven stabilization and control methods for linear systems; see, for example,~\cite{van2020willems} and data-driven pole assignment~\cite{celi23}. More recently, data-driven methods have also been applied to switched linear systems in discrete time~\cite{eising2024data}, in continuous time \cite{bianchi2025data}, for output feedback control~\cite{hu2023data}, and also to nonlinear systems~\cite{Ju2026}.

These approaches typically rely on data informativity conditions to obtain stabilizing controllers from data, possibly without knowing the switching signal during the data collection phase. The setting considered in the present paper is different and complementary: our goal is not only to stabilize the switched system, but to construct mode-dependent feedbacks that rectify the closed-loop eigenstructure. This provides an explicit CQLF and allows the designer to prescribe desired closed-loop eigenvalue locations.

In this paper, we propose a data-driven method for the rectification and stabilization of switched linear systems. We work in discrete time and assume that, during an offline phase, sufficiently informative input--state data are collected for each subsystem. Based on these data, we derive algebraic conditions that characterize the feasibility of a common closed-loop eigenvector basis and provide explicit formulas for the corresponding rectifying feedback matrices. The resulting closed-loop systems share the same eigenvectors, while their eigenvalues may be chosen mode-dependently inside the unit disk. Consequently, a CQLF can be constructed directly from the data, yielding asymptotic stability under arbitrary switching, provided that the active mode is available for the implementation of the corresponding feedback.

The main contributions of this paper are summarized as follows.
\begin{itemize}
    \item We formulate feedback rectification for discrete-time switched linear systems for multiple eigenvalues and relate the existence of stabilizing rectifying feedback matrices to a common closed-loop eigenstructure.
    \item Using input--state data and Willems' fundamental lemma, we derive a data-based representation of the eigenvector assignment subspaces that appear in Moore's eigenstructure assignment theorem.
    \item We provide a data-driven characterization of the parametric subspace conditions required for feedback rectification.
    \item We obtain an explicit data-driven formula for the mode-dependent feedback matrices.
    \item We show how the rectified eigenstructure yields a CQLF directly from data and summarize the resulting design in an algorithm.
\end{itemize}

The paper is organized as follows. Section~\ref{sec:prelim} recalls preliminaries on switched systems, CQLFs, Moore's eigenstructure assignment theorem, and model-based feedback rectification. Section~\ref{sec:main_data} contains the main data-driven results, including the data-based rectifiability condition, the feedback formula, and the resulting algorithm for constructing stabilizing rectifying feedbacks and a CQLF. Section~\ref{sec:example} illustrates the proposed method by numerical examples. Finally, Section~\ref{sec:conclusion} concludes the paper and outlines directions for future work.

\section{Preliminaries}
\label{sec:prelim}
In this section, we recall stability, stabilization and existence conditions for CQLFs in Subsection~\ref{ssec:cqlfs}. Subsection~\ref{ssec:feedbackrectification} presents the feedback rectification together with a~sufficient condition based on  \cite{HoneckerGWTR24}.
\subsection{Asymptotic stability, stabilization, and CQLFs}
\label{ssec:cqlfs}
We consider a switched linear system that is composed by the constituent discrete-time linear time-invariant (LTI) systems given by
\begin{equation}
    x_{k+1} = A_{q} x_k + B_{q} u_k,\quad k\geq 0,
\end{equation}
with $A_q\in\R^{n\times n}$, $B_q\in\R^{n\times m}$ and $q\in\mathcal Q=\{1,2,\dots, \overline q\}$,  where $k\in\N$. These subsystems  constitute the discrete-time switched system
\begin{equation}
    \label{eq:switched_sys}
    x_{k+1} = A_{\sigma(k)} x_k + B_{\sigma(k)} u_k,\quad k\geq 0,
\end{equation}
with switching signal $\sigma: \N \to \mathcal Q$. For the stability analysis we consider the homogeneous system
\begin{align}
    \label{eq:homo}
 x_{k+1} = A_{\sigma(k)}x_{k},\quad  x_0=x^0, \quad k\geq 0,
\end{align}
and call the equilibrium $x^*=0$ \emph{asymptotically stable} if $\lim_{k\rightarrow\infty} x_k=x^*=0$. A \emph{common quadratic Lyapunov function} (CQLF) is a function $\mathcal{V}(x)=\tfrac{1}{2}x^\top Qx$ for some positive definite $Q\in\mathbb{R}^{n\times n}$ which is decreasing along trajectories, i.e.\ there exists $\alpha>0$ such that $\mathcal{V}(x_{k+1})-\mathcal{V}(x_{k})\leq -\alpha \|x_k\|^2$ for all $k\geq 0$.

The aim of this paper is to design based on input--state data a feedback law of the form
\begin{equation}
    \label{eq:feedback_law}
    u_k = F_{\sigma(k)} x_k
\end{equation} with $F_{\sigma(k)}\in \R^{m\times n}$ which asymptotically stabilizes the closed-loop switched system
\begin{equation}
    x_{k+1} = (A_{\sigma(k)} + B_{\sigma(k)}F_{\sigma(k)}) x_k,\quad k\geq 0,
\end{equation}
under arbitrary but known switching signal $\sigma$, by constructing a CQLF for the closed-loop system.

In the following proposition,
we recall a result on the exponential stability of the homogeneous system \eqref{eq:homo} using common quadratic Lyapunov functions. To this end, we use the \emph{spectral radius} $\rho(A)$ of a matrix $A\in\mathbb{R}^{n\times n}$ which is the maximum modulus of its eigenvalues. The proof of the proposition is presented in Section~\ref{sec:prop_lyapunov} in the appendix.
\begin{proposition}
\label{prop:lyapunov}
Given the homogeneous switched system~\eqref{eq:homo} with $\rho(A_q)<1$ for all $q\in\mathcal Q$. If there exists a regular $T\in\C^{n\times n}$ such that $T^{-1}A_q T$ is upper triangular for all $q\in\mathcal Q$, then there exists a common quadratic Lyapunov function. Consequently, the origin of~\eqref{eq:homo} is exponentially stable under arbitrary switching. If in addition $T^{-1}A_q T$ are diagonal for all $q\in\mathcal Q$, then the Lyapunov function is given by
    \[
    \mathcal{V}(x)=\tfrac{1}{2}x^\top T^{-*}T^{-1}x\quad \text{for all $x\in\mathbb{R}^n$}.
    \]
\end{proposition}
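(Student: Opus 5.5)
Simultaneous triangularization plus a diagonal scaling does it. Set $U_q := T^{-1}A_qT$, which is upper triangular with diagonal entries equal to the eigenvalues of $A_q$, so $|(U_q)_{ii}|\le\rho(A_q)<1$. Let $\rho^*:=\max_{q}\rho(A_q)<1$, using that $\mathcal Q$ is finite. The plan is first to build a Hermitian positive definite $P$ with
\[
U_q^*PU_q-P\preceq -\varepsilon I \quad\text{for all } q,
\]
and then transport it back to the original coordinates.

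For the construction I would conjugate with $D_\delta:=\operatorname{diag}(1,\delta,\dots,\delta^{n-1})$, $\delta\in(0,1]$. Then $D_\delta^{-1}U_qD_\delta = \Lambda_q + N_q(\delta)$, where $\Lambda_q$ is the diagonal part and $N_q(\delta)$ is strictly upper triangular with entries $(U_q)_{ij}\delta^{j-i}$, $j>i$. Hence $\|N_q(\delta)\|\le C\delta$, with $C$ independent of $q$. This gives
\[
\|D_\delta^{-1}U_qD_\delta\|\le \rho^*+C\delta<1
\]
for $\delta$ small, with $\delta$ chosen uniformly in $q$. I would take $P:=D_\delta^{-*}D_\delta^{-1}$ and $S:=TD_\delta$, and set $Q:=S^{-*}S^{-1}$. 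For $y=S^{-1}x$ we get $S^{-1}A_qS=D_\delta^{-1}U_qD_\delta=:M_q$ and $\|M_q\|\le\gamma<1$. Writing $x^*Qx=\|S^{-1}x\|^2$, we obtain
\[
x^*A_q^*QA_qx-x^*Qx=\|M_qy\|^2-\|y\|^2\le-(1-\gamma^2)\|y\|^2\le -(1-\gamma^2)\|S\|^{-2}\|x\|^2 .
\]

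The main obstacle is that $T$ is complex, so $Q$ is Hermitian but possibly not real, while the CQLF must have real $Q$. I would replace $Q$ by $\operatorname{Re}Q=\tfrac12(Q+\overline Q)$. For real $x$ and real $A_q$ we have $x^\top\overline{Q}x=\overline{x^\top Qx}=x^\top Qx$, since $x^*Qx$ is real. So $x^\top(\operatorname{Re}Q)x=x^*Qx$ for real $x$, and the same holds for $A_q^\top(\operatorname{Re}Q)A_q$. Therefore $\operatorname{Re}Q$ is real symmetric positive definite and satisfies the same decrease inequality with $\alpha=\tfrac12(1-\gamma^2)\|S\|^{-2}$. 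The $\tfrac12$ factor in $\mathcal V$ is absorbed into $\alpha$.

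Exponential stability then follows in the standard way. We have $\mathcal V(x_{k+1})\le(1-c)\mathcal V(x_k)$ with $c=2\alpha/\lambda_{\max}(\operatorname{Re}Q)\in(0,1]$, and combining this with $\lambda_{\min}\|x\|^2\le 2\mathcal V(x)\le\lambda_{\max}\|x\|^2$ gives geometric decay for every switching signal.

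In the diagonal case the scaling is unnecessary: take $\delta=1$, so $M_q=U_q$ is diagonal and $\|U_q\|=\rho(A_q)<1$. Then $Q=T^{-*}T^{-1}$ works directly, and its real part agrees with it on real vectors. This yields exactly $\mathcal V(x)=\tfrac12x^\top T^{-*}T^{-1}x$, where $x^\top T^{-*}T^{-1}x=\|T^{-1}x\|^2$ is real, as stated.
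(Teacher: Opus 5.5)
Your proof is correct and takes essentially the same route as the paper: the diagonal scaling $D_\delta$ makes the strictly upper triangular parts uniformly small, you set $S=TD_\delta$ and $Q=S^{-*}S^{-1}$ with $\|S^{-1}A_qS\|_2<1$, and you take $\delta=1$ in the diagonal case. You also replace the possibly complex Hermitian $Q$ by $\operatorname{Re}Q$, which gives the real CQLF the paper's definition requires, and you make the decrease constant and the exponential decay explicit; the paper's proof leaves these details implicit.
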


\subsection{Model-based feedback rectification}
\label{ssec:feedbackrectification}
The design of the switched feedback law~\eqref{eq:feedback_law} relies on the seminal result of Moore \cite{Moore76}, which characterizes the eigenvalue assignment for a single LTI system
\begin{equation}
    \label{eq:LTI_sys}
    x_{k+1} = A x_k + B u_k
\end{equation}
with respect to the closed-loop matrix $A + BF$. For $\lambda\in\C$ consider the matrices $N(\lambda)$ and $M(\lambda)$ satisfying
\begin{equation}
    \label{eq:NlambMlamb}
    \operatorname{ker}\begin{bmatrix}
        \lambda I - A & B
    \end{bmatrix} = \operatorname{im}\begin{bmatrix}
        N(\lambda)\\ M(\lambda)
    \end{bmatrix}.
\end{equation}

The following result is an extension of \cite{Moore76} from \cite{KleinMoore77} which characterizes the assignment of multiple eigenvalues.
\begin{theorem}
\label{thm:moore}
Given $(A,B)\in \mathbb R^{n\times n}\times \mathbb R^{n\times m}$ and the self-conjugate set $\Lambda=\{\lambda_1,\dots,\lambda_n\}$. Then there exists $F\in \mathbb R^{m\times n}$ such that $(A+BF)v_i = \lambda_i v_i$, $v_i\neq 0$, for all $i\in\{1,\dots, n\}$ if and only if the following three conditions are satisfied:
\begin{itemize}
    \item[\rm (i)] $\{v_1,\dots, v_n\}\subset \mathbb C^{n}$ is linearly independent;
    \item[\rm (ii)] there exists an involution, i.e.\ a self-inverse mapping
$\pi:\{1,\ldots,n\}\to\{1,\ldots,n\}$ such that
\[
    \lambda_{\pi(i)}=\overline{\lambda_i},
    \qquad
    v_{\pi(i)}=\overline{v_i},
    \qquad i=1,\ldots,n;
\]
    \item[\rm (iii)] $v_i\in \operatorname{im} N(\lambda_i)$ for all $i=1,\ldots,n$,
\end{itemize}
where $\bar v$ and $\bar \lambda$ denote the complex conjugate of $v$ and $\lambda$, respectively.
\end{theorem}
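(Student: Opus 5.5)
The proof splits into the two directions, each handled by explicit linear algebra on the eigenvector matrix $V=[v_1,\dots,v_n]\in\C^{n\times n}$ and the diagonal matrix $D=\operatorname{diag}(\lambda_1,\dots,\lambda_n)$.

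\emph{Necessity.} Assume $F\in\R^{m\times n}$ satisfies $(A+BF)v_i=\lambda_i v_i$ with $v_i\neq 0$. Put $w_i:=Fv_i$. Then
\[
\begin{bmatrix}\lambda_i I-A & B\end{bmatrix}\begin{bmatrix} v_i\\ -w_i\end{bmatrix}=0 .
\]
By \eqref{eq:NlambMlamb} this gives $v_i\in\im N(\lambda_i)$, which is (iii). Condition (i) has to be read as part of the statement: the theorem concerns a complete eigenvector system, i.e.\ a basis of eigenvectors. For distinct $\lambda_i$ independence is automatic. For repeated eigenvalues it is precisely the requirement that $A+BF$ be diagonalizable with the prescribed eigenvectors. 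I would state this interpretation explicitly in the proof.

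For (ii), note that $A+BF$ is real, so conjugating $(A+BF)v_i=\lambda_iv_i$ shows that $\overline{v_i}$ is an eigenvector for $\overline{\lambda_i}$. The self-conjugate list $\Lambda$ admits an involution $\pi$ with $\lambda_{\pi(i)}=\overline{\lambda_i}$. For the second relation, $v_{\pi(i)}=\overline{v_i}$, the same normalization remark applies: within each eigenvalue cluster the chosen basis can be taken conjugation-closed, and I would phrase (ii) as part of the admissible choice.

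\emph{Sufficiency.} By (i), $V$ is invertible. By (iii), there are parameter vectors $z_i$ with $v_i=N(\lambda_i)z_i$. Set $w_i:=M(\lambda_i)z_i$, so that $(\lambda_iI-A)v_i+Bw_i=0$, i.e.\ $Av_i-Bw_i=\lambda_iv_i$. Define
\[
F:=-WV^{-1},\qquad W=[w_1,\dots,w_n].
\]
Then $(A+BF)V=AV-BW=VD$, which is the required eigenrelation.

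\emph{Realness of $F$ (main obstacle).} The only delicate point is that $F$ is real. The $z_i$ are not unique, so $w_i$ must be chosen compatibly with $\pi$:
\begin{itemize}
\item[(a)] For real $\lambda_i$, choose $N(\lambda_i),M(\lambda_i)$ real, since the kernel of a real matrix has a real basis. Then $v_i=\overline{v_i}$ forces $z_i$ to be chosen real: project onto real parts, which is possible because $N(\lambda_i)$ has full column rank after restricting to independent columns.
\item[(b)] For each pair $j=\pi(i)\neq i$, choose $N(\overline\lambda)=\overline{N(\lambda)}$, $M(\overline\lambda)=\overline{M(\lambda)}$ and $z_j=\overline{z_i}$. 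Then $w_j=\overline{w_i}$.
\end{itemize}
With these choices, $\overline V=VP$ and $\overline W=WP$, where $P$ is the permutation matrix of $\pi$. Hence
\[
\overline F=-\overline W\,\overline V^{-1}=-WPP^{-1}V^{-1}=F,
\]
so $F$ is real.

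The main care goes into steps (a)–(b): making $N,M$ full column rank so that $z_i$ is determined by $v_i$, and making the bases conjugation-consistent. Once these are fixed, the remaining computations are routine.
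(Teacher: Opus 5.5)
Your proposal is correct and follows the paper's route: sufficiency is exactly the construction \eqref{eq:F}--\eqref{eq:F_explicit_with_inverse}, and your symmetry argument $\overline V=VP$, $\overline W=WP$ is a compact form of Remark~\ref{rem:real_feedback}, which moreover makes explicit the choice $M(\lambda_{\pi(i)})z_{\pi(i)}=\overline{M(\lambda_i)z_i}$ that the remark tacitly assumes (state case (a) for fixed points of $\pi$ rather than for all real $\lambda_i$, since a real $\lambda_i$ with $\pi(i)\neq i$ may carry a non-real $v_i$ and is covered by (b)). The paper gives no proof of necessity and instead cites Klein--Moore; your caveat there is warranted, since (i) and (ii) are not literally necessary (e.g.\ $v_1=v_2$ for a repeated eigenvalue, or $v_i$ replaced by $\mathrm{i}v_i$ for a simple real $\lambda_i$), so they must be read as constraints on the admissible choice of eigenvectors, as you propose.
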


If the conditions of Theorem~\ref{thm:moore} are satisfied, the feedback matrix $F$ can be computed explicitly. Specifically, for eigenvectors $v_i = N(\lambda_i) w_i$ with suitable $w_i$, the feedback matrix $F$ satisfies
\begin{equation}
    \label{eq:F}
    F\begin{bmatrix}
        v_1 & \dots & v_n
    \end{bmatrix} = -\begin{bmatrix}
        M(\lambda_1)w_1 & \dots & M(\lambda_n)w_n
        \end{bmatrix}.
\end{equation}
Since $\begin{bmatrix}
        v_1 & \dots & v_n
    \end{bmatrix}$ is invertible we can rewrite
\begin{align}
\label{eq:F_explicit_with_inverse}
F=-\begin{bmatrix}
        M(\lambda_1)w_1 &\!\! \dots &\!\! M(\lambda_n)w_n
        \end{bmatrix}\begin{bmatrix}
        v_1 &\!\! \dots &\!\! v_n
    \end{bmatrix}^{-1}.
\end{align}
\begin{remark}
\label{rem:real_feedback}
Note that the matrix $F$ in \eqref{eq:F_explicit_with_inverse} has indeed real entries, i.e.\ $F\in\mathbb{R}^{m\times n}$, as stated in Theorem~\ref{thm:moore} which can be seen from separating the columns into real and imaginary parts. Here, without loss of generality we assume that the eigenvectors are sorted in such a way that the first $2n_1$ eigenvectors correspond to the non-real eigenvalues and that they appear pair-wise with a vector and its complex-conjugate, i.e.\ $v_{2i}=\overline{v_{2i+1}}$ for $i=1,\ldots n_1$ and that the remaining $n-2n_1$ eigenvectors correspond to the real eigenvalues, i.e.\ $v_{i}=\overline{v_{i}}$ for $i=2n_1+1,\ldots,n$. With this assumption and using the abbreviation $v=v^R+\mathrm{i}v^I$ for the decomposition into real and imaginary parts, i.e.\ $v^R, v^I\in\mathbb{R}^n$, and writing $\hat{w}_i:= M(\lambda_i)w_i$ for $i=1,\ldots,n$, we obtain
\begin{align*}
&~~~~\begin{bmatrix}
        M(\lambda_1)w_1 &\!\! \dots &\!\! M(\lambda_n)w_n
        \end{bmatrix}\begin{bmatrix}
        v_1 &\!\! \dots &\!\! v_n
    \end{bmatrix}^{-1}\\
    & = \left[\begin{smallmatrix} \hat{w}_1
^R+ \mathrm{i}\hat w_1^I &\hat{w}_1
^R- \mathrm{i}\hat w_1^I  & \ldots & \hat{w}_{n_1}
^R- \mathrm{i}\hat w_{n_1}^I &\hat{w}_{2n_{1}+1}&\ldots & \hat w_n
        \end{smallmatrix}\right]\\ & ~~~\cdot\begin{bmatrix}
        v_1^R+{\rm i} v_1^I &  v_1^R-{\rm i} v_1^I  &\!\! \dots & v_{2n_1+1}& \ldots&\!\! v_n
    \end{bmatrix}^{-1} \\
    &= \begin{bmatrix} \hat w_1^R& \hat w_1^I & \ldots & \hat w_{n_1}^R& \hat w_{n_1}^I & \hat w_{2n_1+1} \ldots &\!\!  \hat w_n
        \end{bmatrix} \\ &~~~~\left[\begin{smallmatrix}
\mathrm{diag}\left(\begin{bmatrix}
            1 & 1 \\ \mathrm{i} & -\mathrm{i}
        \end{bmatrix}\right)   &0\\0& I_{n-2n_1}      \end{smallmatrix} \right]\left[\begin{smallmatrix}
\mathrm{diag}\left(\begin{bmatrix}
            1 & 1 \\ \mathrm{i} & -\mathrm{i}
        \end{bmatrix}\right)   &0\\0& I_{n-2n_1}      \end{smallmatrix}\right]^{-1}\\
        &~~~~\begin{bmatrix}
        v_1^R & v_1^I &\!\! \dots &\!\! v_{n_1}^R & v_{n_1}^I& v_{2n_1+1} & \ldots & v_n
    \end{bmatrix}^{-1}
    \\&=\begin{bmatrix} \hat w_1^R& \hat w_1^I \, \ldots \,\hat w_{n_1}^R& \hat w_{n_1}^I & \hat w_{2n_1+1} \,\ldots \, \hat w_n
        \end{bmatrix}\\
        &~~~~\begin{bmatrix} v_1^R& v_1^I \, \ldots \, v_{n_1}^R& v_{n_1}^I & v_{2n_1+1}\,  \ldots \,  v_n
        \end{bmatrix}^{-1} \in\mathbb{R}^{m\times n}.
\end{align*}
From this calculation it can be seen that the feedback matrix $F$ still has real entries when the eigenvectors are not sorted pairwise by applying suitable perturbation matrices.
\end{remark}

We recall the definition of feedback rectification from \cite{HoneckerGWTR24} slightly extended to more than two switching subsystems.
\begin{definition}
\label{def:fbrect}
Let $A_q\in\R^{n\times n}$ and $B_q\in\R^{n\times m}$, $q\in\mathcal{Q}$, and  let $D\subset\C^{\bar q}$.
Then the set of matrices $\{(A_q,B_q)\}_{q\in \mathcal{Q}}$ is called \emph{feedback rectifiable over $D$} if there exists $F_q\in\R^{m\times n}$, tuples of complex numbers $(\lambda_{i}^{(1)},\ldots,\lambda_{i}^{(\bar q)})\in D$, $i=1,\ldots,n$, and a linearly independent set $\{v_1,\ldots,v_n\}$ in $\C^n$ such that the following holds
\begin{align}
\label{def:ferec}
(A_q+B_qF_q)v_i=\lambda_i^{(q)}v_i,
\end{align}
for all $i=1,\ldots,n$ and all $q=1,\ldots,\bar q$.
\end{definition}

A sufficient condition for feedback rectifiability over $D$ is given by the following result, which is a slight generalization of \cite{HoneckerGWTR24} to switched systems with more than two subsystems and directly follows from Theorem~\ref{thm:moore}. Similarly to a single LTI system case, we consider for $\lambda\in\C$ and $q\in\mathcal Q$ matrices $N_q(\lambda)$, $M_q(\lambda)$ satisfying
\begin{equation}
    \ker \begin{bmatrix}
        \lambda I-A_q & B_q
    \end{bmatrix} = \im \begin{bmatrix}
        N_q(\lambda)\\
        M_q(\lambda)
    \end{bmatrix}.
\end{equation}

\begin{proposition}
\label{prop:rect}
Let $A_q\in\R^{n\times n}$ and $B_q\in\R^{n\times m}$, $q\in\mathcal{Q}$, and let $D\subset\C^{\bar q}$.
Then the family $\{(A_q,B_q)\}_{q\in\mathcal Q}$ is feedback
rectifiable over $D$ if there exist indexed eigenvalue tuples
\[
    \bigl(
        \lambda_i^{(1)},\ldots,\lambda_i^{(\bar q)}
    \bigr)
    \in D,
    \qquad i=1,\ldots,n,
\]
an involution
$\pi:\{1,\ldots,n\}\to\{1,\ldots,n\}$, $\pi(\pi(i))=i$,
and linearly independent vectors
$v_1,\ldots,v_n\in\mathbb C^n$ such that, for every
$i=1,\ldots,n$ we have  $v_i\in
        \bigcap_{q=1}^{\bar q}
        \operatorname{im}N_q\bigl(\lambda_i^{(q)}\bigr)$ and the following condition
\begin{align}
    \lambda_{\pi(i)}^{(q)}
        &= \overline{\lambda_i^{(q)}}
        \quad \text{for every } q\in\mathcal Q, \quad \text{and} \quad    v_{\pi(i)}
        = \overline{v_i}. \label{eq:tuple_conjugacy}
\end{align}
\end{proposition}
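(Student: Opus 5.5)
The plan is to apply Theorem~\ref{thm:moore} separately to each mode $q\in\mathcal Q$, using one common set of vectors $v_1,\ldots,v_n$ and the mode-dependent eigenvalues $\lambda_1^{(q)},\ldots,\lambda_n^{(q)}$. The resulting feedbacks $F_q$ then satisfy \eqref{def:ferec} by construction. Since the tuples $(\lambda_i^{(1)},\ldots,\lambda_i^{(\bar q)})$ lie in $D$ by hypothesis, this is exactly feedback rectifiability over $D$ in the sense of Definition~\ref{def:fbrect}.

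Concretely, fix $q\in\mathcal Q$ and set $\Lambda_q=\{\lambda_1^{(q)},\ldots,\lambda_n^{(q)}\}$, understood as an indexed list with multiplicities. We check the three conditions of Theorem~\ref{thm:moore} for the pair $(A_q,B_q)$ and the vectors $v_1,\ldots,v_n$.
\begin{itemize}
\item[(i)] Linear independence of $\{v_1,\ldots,v_n\}$ holds by assumption, and it does not depend on $q$.
\item[(ii)] We take the common involution $\pi$. Condition \eqref{eq:tuple_conjugacy} gives $\lambda^{(q)}_{\pi(i)}=\overline{\lambda^{(q)}_i}$ and $v_{\pi(i)}=\overline{v_i}$ for every $q$. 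In particular $\Lambda_q$ is self-conjugate, as Theorem~\ref{thm:moore} requires.
\item[(iii)] Since $v_i\in\bigcap_{q'}\operatorname{im}N_{q'}(\lambda_i^{(q')})$, we have in particular $v_i\in\operatorname{im}N_q(\lambda_i^{(q)})$.
\end{itemize}
The ``if'' direction of Theorem~\ref{thm:moore} then yields $F_q\in\R^{m\times n}$ with $(A_q+B_qF_q)v_i=\lambda_i^{(q)}v_i$ for all $i$. Explicitly, $F_q$ is given by \eqref{eq:F_explicit_with_inverse} with $M_q,N_q$ in place of $M,N$, and $w_i^{(q)}$ chosen so that $v_i=N_q(\lambda_i^{(q)})w_i^{(q)}$. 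Remark~\ref{rem:real_feedback} guarantees that $F_q$ is real.

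The one point that needs care is that the same $v_i$ must serve as eigenvectors for every mode, and that the conjugation pairing must be compatible across modes. Both are ensured because $\pi$ is chosen independently of $q$ and because the intersection condition holds index-wise. If realness of $F_q$ required the preimages $w_i^{(q)}$ to be conjugate-compatible, I would choose $w^{(q)}_{\pi(i)}=\overline{w^{(q)}_i}$. This choice is admissible because $A_q,B_q$ are real, so we may take $N_q(\bar\lambda)=\overline{N_q(\lambda)}$ and $M_q(\bar\lambda)=\overline{M_q(\lambda)}$. With it, $M_q(\lambda^{(q)}_{\pi(i)})w^{(q)}_{\pi(i)}=\overline{M_q(\lambda^{(q)}_i)w^{(q)}_i}$, which is exactly what Remark~\ref{rem:real_feedback} uses. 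Beyond this bookkeeping, no genuine obstacle is expected.
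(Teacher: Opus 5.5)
Your proposal is correct and is essentially the paper's argument: the paper only states that Proposition~\ref{prop:rect} follows directly from Theorem~\ref{thm:moore}, and you make this explicit by checking conditions (i)--(iii) mode by mode, using the common vectors $v_i$ and the common involution $\pi$. Your extra bookkeeping on conjugate-compatible preimages $w_i^{(q)}$ is not needed when Theorem~\ref{thm:moore} is used as a black box, since it already yields a real $F_q$; it does, however, correctly justify the realness argument of Remark~\ref{rem:real_feedback} for the explicit formula.
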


\begin{remark}
In the case of two switching systems, i.e.\ $\bar q=|\mathcal{Q}|=2$ one can use row-reduced echelon forms over matrices of rational functions in two variables to compute the subspace intersection in Proposition~\ref{prop:rect} explicitly \cite{HoneckerGWTR24}. In principle, this can be extended to more than two subsystems, although this might lead to complex expressions of rational functions in the subspace representation.
\end{remark}

\section{Data-driven feedback rectification}
\label{sec:main_data}

In this section, we present our main results on data-driven feedback rectification. First, we recall in Subsection~\ref{ssec:data_LTI} the fundamental lemma by Willems et al. \cite{Willems2005}. In Subsection~\ref{ssec:data_main} we present the main result where we obtain a data-driven condition to verify the sufficient condition for rectifiability from Proposition~\ref{prop:rect} and a formula for computation of a rectifying feedback.

\subsection{Single LTI systems: Data-driven feedback rectification}
\label{ssec:data_LTI}

Suppose that we have measured an input--state trajectory $(x,u)$ of the LTI system~\eqref{eq:LTI_sys}, and collect these data in matrices
\begin{equation}
    \label{eq:data_matrices}
    \begin{aligned}
    U &= \begin{bmatrix}
        u_0 & \dots & u_{N-2}
    \end{bmatrix},\quad X = \begin{bmatrix}
        x_0 & \dots & x_{N-2}
    \end{bmatrix},\\
    X_+ &= \begin{bmatrix}
        x_1 & \dots & x_{N-1}
    \end{bmatrix}.
    \end{aligned}
\end{equation}
In this case, the data matrices satisfy
\begin{equation}
\label{eq:data_dynamcis}
    X_+ = A X + BU.
\end{equation}
A necessary condition on the data, in order to completely describe the system, is
\begin{equation}
    \label{eq:rank}
    \operatorname{rank}\begin{bmatrix}
        X\\ U
    \end{bmatrix} = n+m.
\end{equation}
In this case the system matrices $A$ and $B$ can be directly reconstructed from the data,
\begin{equation}
    \begin{bmatrix}
        A & B
    \end{bmatrix} = X_+ \begin{bmatrix}
        X\\U
    \end{bmatrix}^\dagger,
\end{equation}
where $M^\dagger$ denotes the Moore--Penrose pseudo inverse of a matrix $M$. A sufficient input-design condition for the data to satisfy \eqref{eq:rank} follows e.g.\ by the Willems et al.'s fundamental lemma \cite[Theorem~1]{Willems2005}, see also \cite[Thorem~1]{van2020willems}.
\begin{lemma}
\label{lem:FL}
    Let $(A,B)$ be controllable and consider an input--state $(u,x)$ trajectory of system~\eqref{eq:LTI_sys} of length $N$. If $u$ is persistently exciting of order $L=1+n$, that is, the Hankel matrix
    \begin{equation}
        \begin{bmatrix}
            u_0 & \dots & u_{N-L}\\
            \vdots & \ddots & \vdots\\
            u_{L-1} & \dots & u_{N-1}
        \end{bmatrix}\in\mathbb{R}^{mL\times (N-L+1)}
    \end{equation}
    has full row rank, then \eqref{eq:rank} holds.
\end{lemma}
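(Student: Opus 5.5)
The plan is to argue by contradiction via left annihilators. Suppose the rank condition \eqref{eq:rank} fails. Then there is a nonzero row vector $[\xi^\top\ \eta^\top]$ with $\xi\in\R^n$, $\eta\in\R^m$ such that $\xi^\top x_k + \eta^\top u_k = 0$ for all $k=0,\dots,N-2$. The strategy is to propagate this single annihilator forward along the trajectory using the dynamics \eqref{eq:LTI_sys}. This produces a nonzero left annihilator of the input Hankel matrix of depth $L=n+1$, which contradicts persistency of excitation.

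First, I shift the relation. Substituting $x_{k+1}=Ax_k+Bu_k$ gives relations of the form $\xi^\top A^j x_k + \sum_{\ell} c_\ell^\top u_{k+\ell} = 0$ on a suitable shifted index window. More systematically, I write $x_{k+j}=A^jx_k+\sum_{\ell=0}^{j-1}A^{j-1-\ell}Bu_{k+\ell}$. I then take a linear combination $\sum_{j=0}^{n}\alpha_j(\xi^\top x_{k+j}+\eta^\top u_{k+j})=0$, valid for $k=0,\dots,N-L$, with the coefficients $\alpha_j$ of the characteristic polynomial of $A$ ($\alpha_n=1$). By Cayley--Hamilton, $\sum_j\alpha_j\xi^\top A^j x_k=0$, so the state term drops out. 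What remains is a relation $\sum_{\ell=0}^{n}\gamma_\ell^\top u_{k+\ell}=0$ for all $k=0,\dots,N-L$, with explicit coefficients
\[
\gamma_\ell^\top=\alpha_\ell\eta^\top+\sum_{j=\ell+1}^{n}\alpha_j\xi^\top A^{j-1-\ell}B .
\]
In other words, $[\gamma_0^\top\ \cdots\ \gamma_n^\top]$ annihilates the Hankel matrix from the left. Full row rank of that matrix then forces all $\gamma_\ell=0$.

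The main obstacle is deducing $\xi=0$ and $\eta=0$ from $\gamma_\ell=0$ for all $\ell$. I would peel the coefficients from the top.
\begin{itemize}
\item $\gamma_n=\eta$, since $\alpha_n=1$ and the sum is empty. Hence $\eta=0$.
\item $\gamma_{n-1}^\top=\xi^\top B$, so $\xi^\top B=0$.
\item $\gamma_{n-2}^\top=\xi^\top AB+\alpha_{n-1}\xi^\top B$, so $\xi^\top AB=0$.
\end{itemize}
Inductively this gives $\xi^\top A^{i}B=0$ for $i=0,\dots,n-1$, so $\xi^\top[B\ AB\ \cdots\ A^{n-1}B]=0$. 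Controllability then gives $\xi=0$. This contradicts the annihilator being nonzero, so \eqref{eq:rank} holds.

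Finally, I would check the index bookkeeping. The combination uses $u_{k+j}$ and $x_{k+j}$ for $j\le n$ and $k\le N-L$, so every index stays at most $N-2$ for $x$ (given $k+n\le N-2$ when $j=n$) and the annihilation of $[X;U]$ applies to all terms used. The Hankel matrix involves inputs up to $u_{N-1}$, but the relation only needs columns $k=0,\dots,N-L$, where $k+n=N-1-1+\dots$. If there is an off-by-one issue, I would note that an annihilation on a subset of Hankel columns still contradicts full row rank, provided enough columns remain. The standard alternative is to cite \cite[Theorem~1]{Willems2005} directly, as the paper does.
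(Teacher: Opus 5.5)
\textbf{Gap: the last Hankel column.} Your core argument is the standard proof of the state-space fundamental lemma. You take a left annihilator of $\begin{bmatrix} X\\ U\end{bmatrix}$, shift it along the trajectory, eliminate the state with Cayley--Hamilton, peel $\gamma_n,\gamma_{n-1},\dots,\gamma_0$ from the top, and invoke controllability. The formula for $\gamma_\ell$ and the peeling induction are correct. The paper gives no proof of its own and only cites the literature.

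The problem is exactly the index bookkeeping you flagged. The relation $\xi^\top x_t+\eta^\top u_t=0$ is available only for $t\le N-2$, because $X$ and $U$ stop at index $N-2$. Your combination at Hankel column $k$ uses $t=k+n$. Since $k\le N-L=N-n-1$ gives $k+n\le N-1$, your parenthetical claim ``$k+n\le N-2$'' is wrong. At $k=N-L$ you would need $\xi^\top x_{N-1}+\eta^\top u_{N-1}=0$, which is not given. So $[\gamma_0^\top\ \cdots\ \gamma_n^\top]$ only annihilates the Hankel matrix with its last column removed. Your proposed patch, that annihilating a subset of columns still contradicts full row rank, is false. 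Full row rank is not inherited by column submatrices; delete one column of an invertible square matrix, for instance.

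\textbf{The literal statement is false, so this cannot be repaired.} Take $n=m=1$, any $a\in\R$, $b\neq 0$ (so the system is controllable), $N=3$, $x_0=0$, $u_0=0$, $u_1=1$. The Hankel matrix $\begin{bmatrix}0&1\\1&u_2\end{bmatrix}$ has determinant $-1$, so it has full row rank. But $x_1=0$, so $\begin{bmatrix}X\\U\end{bmatrix}=\begin{bmatrix}0&0\\0&1\end{bmatrix}$ has rank $1<n+m$. The input $u_{N-1}$ never affects the recorded data, yet the hypothesis lets it supply the rank of the Hankel matrix.

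The correct hypothesis, as in Theorem~1 of \cite{van2020willems}, is persistency of excitation of $u_0,\dots,u_{N-2}$. That means the depth-$(n+1)$ Hankel matrix with columns $k=0,\dots,N-n-2$ has full row rank. An equivalent fix is to keep the stated Hankel matrix but include index $N-1$ in $X$ and $U$. Under either corrected version your relation holds on every Hankel column, and your argument goes through unchanged. You should state and prove that version rather than try to rescue the off-by-one.
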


If no single consecutive data trajectory satisfying~\eqref{eq:rank} can be measured, for instance due to interrupted measurements, one may instead rely on multiple fragmented datasets. As shown in \cite[Theorem~2]{van2020willems}, such datasets can collectively provide a complete description of the LTI system.
\begin{lemma}
\label{lem:FLfrag}
    Let $(A,B)$ be controllable and consider input--state trajectories $(u^i, x^i)$ of length $N_i$, $i=1,\dots,K$. If the inputs $(u^i, \dots, u^K)$ are collectively persistently exciting of the order $L=1+n$, that is the mosaic-Hankel matrix
    \begin{equation}
        \begin{bmatrix}
            u_0^1 & \dots & u_{N_1-L}^1 & \dots & u_{0}^K & \dots & u_{N_K-L}^K\\
            \vdots & \ddots & \vdots & & \vdots & \ddots & \vdots\\
            u_{L-1}^1 & \dots & u_{N_1-1}^1 & \dots & u_{L-1}^K & \dots & u_{N_K-1}^K
        \end{bmatrix}
    \end{equation}
    has full row rank, then the data matrices
    \begin{equation*}
    \begin{aligned}
        U &= \begin{bmatrix}
            u_0^1&\dots & u_{N_1-2}^1 & \dots & u_{0}^K & \dots & u_{N_K-2}^K
        \end{bmatrix},\\[0.5em]
        X &= \begin{bmatrix}
            x_0^1&\dots & x_{N_1-2}^1 & \dots & x_{0}^K & \dots & x_{N_K-2}^K
        \end{bmatrix},\\[0.5em]
        X_+ &= \begin{bmatrix}
            x_1^1&\dots & x_{N_1-1}^1 & \dots & x_{1}^K & \dots & x_{N_K-1}^K
        \end{bmatrix}
    \end{aligned}
    \end{equation*}
    satisfy \eqref{eq:data_dynamcis} and \eqref{eq:rank}.
\end{lemma}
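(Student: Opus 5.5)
The plan is to reduce Lemma~\ref{lem:FLfrag} to the single-trajectory argument behind Lemma~\ref{lem:FL}, applied to the concatenated data. The identity \eqref{eq:data_dynamcis} is immediate. Each column of $\begin{bmatrix} X\\U\end{bmatrix}$ and the matching column of $X_+$ come from one trajectory $(u^i,x^i)$, so $x^i_{j+1}=Ax^i_j+Bu^i_j$ holds column by column. The real content is the rank condition~\eqref{eq:rank}.

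I argue by contradiction, following the proof of \cite[Theorem~1]{Willems2005}. Suppose a nonzero row vector $\begin{bmatrix}\xi & \eta\end{bmatrix}$, with $\xi\in\mathbb R^{1\times n}$ and $\eta\in\mathbb R^{1\times m}$, annihilates $\begin{bmatrix} X\\U\end{bmatrix}$. Then $\xi x^i_j+\eta u^i_j=0$ for every trajectory index $i$ and every admissible $j$. Propagating the state forward,
\[
x^i_{j+k}=A^k x^i_j+\sum_{\ell=0}^{k-1}A^{k-1-\ell}Bu^i_{j+\ell},
\]
I build from $\begin{bmatrix}\xi & \eta\end{bmatrix}$ a family of annihilators of the depth-$L$ stacked matrix whose $i$-th block column has columns $\begin{bmatrix} x^i_j\\ u^i_j\\ \vdots\\ u^i_{j+L-1}\end{bmatrix}$ for $j=0,\dots,N_i-L$. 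Concretely, I take the vectors $\begin{bmatrix}\xi A^k & \xi A^{k-1}B & \cdots & \xi B & \eta & 0 & \cdots\end{bmatrix}$ for $k=0,\dots,n$. This only uses the vector identity above inside each fragment, so the same combinatorics as in the single-trajectory proof applies verbatim to every block column.

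Next I use controllability together with Cayley--Hamilton (the characteristic polynomial $\sum_k\alpha_k A^k$, $\alpha_n=1$). Taking the $\alpha_k$-weighted sum of these annihilators cancels the state component, because $\xi\sum_k\alpha_kA^k=0$. What remains is a vector acting only on the input rows, and it annihilates the mosaic-Hankel matrix of depth $L=n+1$. By the collective persistency of excitation this vector must vanish. Reading its block entries from the top ($\eta$ in the last position, then $\xi B+\alpha_{n-1}\eta$, and so on) gives $\eta=0$ and $\xi A^kB=0$ for $k=0,\dots,n-1$. Controllability then forces $\xi=0$, which is the desired contradiction.

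The main obstacle is the index bookkeeping. The depth-$L$ windows must fit inside each fragment; this is exactly why the mosaic Hankel matrix uses columns $0,\dots,N_i-L$ while the data matrices use $0,\dots,N_i-2$. The annihilator relation for $\begin{bmatrix} X\\U\end{bmatrix}$ must hold at all shifts $j,\dots,j+n$ used in the construction. Since $N_i-L+n=N_i-1$, I need to check this carefully, possibly using only $x^i_j$ up to $j+n-1$ and $u$ up to $j+n$. Once verified for one fragment, concatenation is harmless because every relation is columnwise, so no cross-fragment term appears. This is precisely the argument of \cite[Theorem~2]{van2020willems}, which could alternatively be cited directly.
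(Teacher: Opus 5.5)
Your skeleton (a left annihilator $\begin{bmatrix}\xi & \eta\end{bmatrix}$ of $\begin{bmatrix} X\\ U\end{bmatrix}$, forward propagation, a Cayley--Hamilton combination, then controllability) is the standard argument behind \cite[Theorem~2]{van2020willems}, which the paper cites without proof. However, the step you deferred as index bookkeeping is exactly where the proof fails, and it cannot be repaired.

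Take the last window of fragment $i$, $j=N_i-L=N_i-n-1$. Your $k=n$ vector applied to that Hankel column equals $\xi x^i_{N_i-1}+\eta u^i_{N_i-1}$. But $\begin{bmatrix} x^i_{N_i-1}\\ u^i_{N_i-1}\end{bmatrix}$ is not a column of $\begin{bmatrix} X\\ U\end{bmatrix}$, since those columns stop at index $N_i-2$, so nothing forces this quantity to vanish. Rewriting $x^i_{N_i-1}$ in terms of $x^i_j$ and earlier inputs does not change its value. You also cannot drop the $k=n$ term, because $\alpha_n=1$. So your combined vector only annihilates the mosaic-Hankel matrix with the last column of each block deleted, i.e.\ the depth-$L$ mosaic-Hankel matrix of $u^i_0,\dots,u^i_{N_i-2}$. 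Full row rank of the stated, larger matrix gives no contradiction. Your remark that this is ``exactly why'' the two matrices use columns $0,\dots,N_i-L$ and $0,\dots,N_i-2$ is therefore mistaken: with these ranges the windows do not fit.

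In fact the lemma as indexed is false, and so is Lemma~\ref{lem:FL}, which is the case $K=1$. Take $n=m=1$, $A=0$, $B=1$, $K=1$, $N_1=3$, $x_0=0$ and $u_0=0$, $u_1=1$, $u_2=0$ (dropping superscripts), so $x_1=0$ and $x_2=1$. Then the Hankel matrix
\[
\begin{bmatrix} u_0 & u_1\\ u_1 & u_2\end{bmatrix}=\begin{bmatrix} 0 & 1\\ 1 & 0\end{bmatrix}
\]
has full row rank, yet $\begin{bmatrix} X\\ U\end{bmatrix}=\begin{bmatrix} 0 & 0\\ 0 & 1\end{bmatrix}$ has rank $1<n+m$. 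The sample $u^i_{N_i-1}$ enters the Hankel matrix but affects no recorded column.

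The hypothesis must be placed on the inputs that actually appear in $U$: full row rank of the mosaic-Hankel matrix of $u^i_0,\dots,u^i_{N_i-2}$, i.e.\ block columns $j=0,\dots,N_i-L-1$. This matches the cited source, where the Hankel matrix and $\begin{bmatrix} X\\ U\end{bmatrix}$ use the same time indices. Alternatively, keep the stated Hankel matrix and append the columns $\begin{bmatrix} x^i_{N_i-1}\\ u^i_{N_i-1}\end{bmatrix}$, which requires recording $x^i_{N_i}$ for $X_+$. With either correction, every shift $j,\dots,j+n$ you need lies in the admissible range. The rest of your argument ($\eta=0$, $\xi A^kB=0$ for $k<n$, hence $\xi=0$ by controllability) then goes through verbatim, block by block.
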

Given sufficiently informative input--state data, $\operatorname{ker}\begin{bmatrix}
    \lambda I-A & B
\end{bmatrix}$ and the relation \eqref{eq:NlambMlamb} can be reformulated in a data-based manner.
\begin{lemma}
    \label{lem:data_kernel}
    Consider the data matrices $U,X,X_+$ in~\eqref{eq:data_matrices} and suppose \eqref{eq:rank}. For every $\lambda\in\mathbb C$ the following holds:
    \begin{equation}
    \label{eq:data_kernel}
        \operatorname{ker}\begin{bmatrix}
            \lambda I - A & B
        \end{bmatrix} = \operatorname{im}\begin{bmatrix} N(\lambda)\\ M(\lambda) \end{bmatrix}=  \begin{bmatrix} X\\ -U\end{bmatrix}\operatorname{ker}\begin{bmatrix}
        \lambda X-X_+\end{bmatrix}.
        \end{equation}
\end{lemma}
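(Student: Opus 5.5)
The plan is to prove the two inclusions between $\ker\begin{bmatrix}\lambda I-A & B\end{bmatrix}$ and $\begin{bmatrix} X\\ -U\end{bmatrix}\ker(\lambda X-X_+)$ separately. The first equality in \eqref{eq:data_kernel} is just the defining relation \eqref{eq:NlambMlamb}. Both inclusions rest on the data identity \eqref{eq:data_dynamcis}, $X_+=AX+BU$. We also use the rank condition \eqref{eq:rank}, which says $\begin{bmatrix} X\\ U\end{bmatrix}$ is surjective onto $\C^{n+m}$ when viewed as a complex matrix; real full row rank gives complex full row rank.

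\emph{Inclusion ``$\supseteq$''.} Let $g\in\ker(\lambda X-X_+)$, so $\lambda Xg=X_+g=AXg+BUg$. Then
\[
\begin{bmatrix}\lambda I-A & B\end{bmatrix}\begin{bmatrix} Xg\\ -Ug\end{bmatrix}=\lambda Xg-AXg-BUg=0 .
\]
Hence $\begin{bmatrix} Xg\\ -Ug\end{bmatrix}$ lies in the kernel. This direction needs no rank assumption.

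\emph{Inclusion ``$\subseteq$''.} Take $\begin{bmatrix} v\\ w\end{bmatrix}$ with $(\lambda I-A)v+Bw=0$. By \eqref{eq:rank} we choose $g\in\C^{N-1}$ with $Xg=v$ and $Ug=-w$; for instance $g=\begin{bmatrix} X\\ U\end{bmatrix}^\dagger\begin{bmatrix} v\\ -w\end{bmatrix}$. Then
\[
(\lambda X-X_+)g=\lambda v-Av-BUg=\lambda v-Av+Bw=0 ,
\]
so $g\in\ker(\lambda X-X_+)$ and $\begin{bmatrix} v\\ w\end{bmatrix}=\begin{bmatrix} X\\ -U\end{bmatrix}g$.

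The only delicate point is the surjectivity step in the second inclusion. It is where \eqref{eq:rank} enters, and it requires passing from real full row rank to solvability over $\C$. This holds because the rank of a real matrix is the same over $\R$ and $\C$. The remaining steps are direct substitutions of \eqref{eq:data_dynamcis}.
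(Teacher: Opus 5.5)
Your proof is correct and is essentially the paper's argument. The paper applies the identity $\ker P = Q\ker(PQ)$ for surjective $Q$, with $P=\begin{bmatrix}\lambda I-A & B\end{bmatrix}$, $Q=\begin{bmatrix} X\\ -U\end{bmatrix}$ and $PQ=\lambda X-X_+$; your two inclusions are simply that identity proved out in this setting, and your remark on real versus complex rank makes explicit a step the paper leaves implicit.
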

\begin{proof}
    For any pair of compatible matrices $P$ and $Q$, where $Q$ is surjective, it holds that $\operatorname{ker}(P) = Q\operatorname{ker}(PQ)$. Let $P = \begin{bmatrix}
        \lambda I - A & B
    \end{bmatrix}$ and $Q = [\begin{smallmatrix}X\\ -U\end{smallmatrix}]$. By \eqref{eq:rank}, $Q$ is surjective.
    Moreover, \eqref{eq:data_dynamcis} yields
    \begin{equation}
        \lambda X - X_+ = \begin{bmatrix}
            \lambda I - A & B
        \end{bmatrix}\begin{bmatrix}X\\ -U\end{bmatrix} = PQ.
    \end{equation}
    This yields the claim.
\end{proof}
Consequently, this allows Moore's theorem to be restated in terms of data, see also \cite{celi23} for a data-driven pole-placement result.
\begin{theorem}
\label{thm:moore_dd}
    Consider the data matrices $U,X,X_+$ in~\eqref{eq:data_matrices} and suppose \eqref{eq:rank}.
    Given the self-conjugate set $\Lambda=\{\lambda_1,\dots,\lambda_n\}$, there exists $F\in \mathbb R^{m\times n}$ such that $(A+BF)v_i = \lambda_i v_i$, $v_i\neq 0$, for all $i\in\{1,\dots, n\}$ if and only if the following three conditions are satisfied:
\begin{itemize}
    \item[\rm (i)] $\{v_1,\dots, v_n\}\subset \mathbb C^{n}$ is linearly independent;
  \item[\rm (ii)] there exists an involution
$\pi:\{1,\ldots,n\}\to\{1,\ldots,n\}$ such that, for every
$i=1,\ldots,n$,
\[
    \lambda_{\pi(i)}=\overline{\lambda_i},
    \qquad
    v_{\pi(i)}=\overline{v_i};
\]
    \item[\rm (iii)] $v_i\in X \operatorname{ker}(\lambda_i X-X_+)$ for all $i=1,\ldots,n$.
\end{itemize}

Let $g_i\in \ker(\lambda_i X-X_+)$ such that $v_i=Xg_i$ for $i=1,\dots, n$. Then the feedback matrix $F\in\mathbb{R}^{m\times n}$ is given by
\begin{equation}
\label{eq:ddF}
    F = U\begin{bmatrix}
        g_1 & \dots &  g_n
    \end{bmatrix} \begin{bmatrix}
        v_1 & \dots & v_n
    \end{bmatrix}^{-1}.
\end{equation}
\end{theorem}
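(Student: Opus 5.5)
The plan is to reduce the statement to Theorem~\ref{thm:moore} by means of Lemma~\ref{lem:data_kernel}. Conditions (i) and (ii) coincide verbatim with those of Theorem~\ref{thm:moore}, so only condition (iii) needs translating. By Lemma~\ref{lem:data_kernel}, under \eqref{eq:rank} we have, for each $\lambda_i$,
\[
\operatorname{im}\begin{bmatrix} N(\lambda_i)\\ M(\lambda_i)\end{bmatrix} = \begin{bmatrix} X\\ -U\end{bmatrix}\ker(\lambda_i X - X_+).
\]
Projecting onto the first $n$ components gives $\operatorname{im} N(\lambda_i) = X\ker(\lambda_i X-X_+)$. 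Hence condition (iii) of Theorem~\ref{thm:moore} is equivalent to condition (iii) here, and the ``if and only if'' part follows at once from Theorem~\ref{thm:moore}.

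For the feedback formula, I will take $g_i\in\ker(\lambda_i X - X_+)$ with $Xg_i=v_i$. Then $\begin{bmatrix} v_i\\ -Ug_i\end{bmatrix}$ lies in $\ker\begin{bmatrix}\lambda_i I - A & B\end{bmatrix}$ by the factorization $\lambda_i X - X_+ = \begin{bmatrix}\lambda_i I-A & B\end{bmatrix}\begin{bmatrix}X\\-U\end{bmatrix}$ used in the proof of Lemma~\ref{lem:data_kernel}. This means $(\lambda_i I - A)v_i - BUg_i = 0$, i.e.\ $Av_i + BUg_i = \lambda_i v_i$. With $F$ defined by \eqref{eq:ddF}, we get $Fv_i = Ug_i$, since $[v_1\ \cdots\ v_n]$ is invertible by (i). Therefore $(A+BF)v_i = \lambda_i v_i$. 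This computation is direct and in fact does not need Theorem~\ref{thm:moore} at all. It is also consistent with \eqref{eq:F}: the choice $w_i$ corresponding to $g_i$ yields $M(\lambda_i)w_i = -Ug_i$.

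The remaining point, which I expect to be the only subtle one, is that $F$ must be real. The $g_i$ are not unique, so an arbitrary choice need not respect conjugation. I would select them compatibly with the involution $\pi$: choose $g_i$ freely for one representative $i$ of each orbit of $\pi$ and set $g_{\pi(i)}=\overline{g_i}$. This is admissible because $X,X_+$ are real and $\lambda_{\pi(i)}=\overline{\lambda_i}$, so $\overline{g_i}\in\ker(\lambda_{\pi(i)}X-X_+)$ and $X\overline{g_i}=\overline{v_i}=v_{\pi(i)}$. For fixed points $\pi(i)=i$ with $\lambda_i$ and $v_i$ real, I replace $g_i$ by its real part, which still satisfies both conditions. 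Then $U[g_1\ \cdots\ g_n]$ and $[v_1\ \cdots\ v_n]$ are both invariant under conjugation combined with the same column permutation. The argument of Remark~\ref{rem:real_feedback} then shows $F\in\mathbb{R}^{m\times n}$.

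Alternatively, one can avoid this choice altogether. For any choice of the $g_i$, $F$ is uniquely determined by the $n$ eigenpair equations $Fv_i = Ug_i$ together with the relation $Av_i+BUg_i=\lambda_i v_i$. Since $B$ need not be injective, however, $Ug_i$ is not determined by $v_i$ alone. So I would state the reality claim for the conjugation-compatible choice described above.
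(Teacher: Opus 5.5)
Your proof is correct. The equivalence is argued exactly as in the paper: Theorem~\ref{thm:moore} plus Lemma~\ref{lem:data_kernel}, projected onto the first $n$ coordinates. For \eqref{eq:ddF} the routes differ slightly. The paper picks $w_i$ with $N(\lambda_i)w_i=Xg_i$, $M(\lambda_i)w_i=-Ug_i$ and substitutes into the model-based relation \eqref{eq:F}. You instead verify $Fv_i=Ug_i$ and $Av_i+BUg_i=\lambda_i v_i$ directly from $\lambda_iX-X_+=\begin{bmatrix}\lambda_iI-A & B\end{bmatrix}\begin{bmatrix}X\\-U\end{bmatrix}$. Your version is self-contained and needs neither Moore's theorem nor the parametrization $N,M$.

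The more substantive difference is realness, which the paper's proof never addresses. It is inherited from \eqref{eq:F} and Remark~\ref{rem:real_feedback}, which tacitly assume that the vectors $M(\lambda_i)w_i$ respect the involution $\pi$. You are right that this is not automatic, so the statement read literally (real $F$ for \emph{every} admissible choice of the $g_i$) is too strong. A counterexample: take $n=1$, $m=2$, $A=0$, $B=\begin{bmatrix}1&0\end{bmatrix}$, $\lambda_1=\tfrac12$, $v_1=1$, with data satisfying \eqref{eq:rank}. Any $g_1$ with $Xg_1=1$ and $Ug_1=\begin{bmatrix}\tfrac12&\mathrm{i}\end{bmatrix}^\top$ lies in $\ker(\lambda_1X-X_+)$ and gives a non-real $F$.

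Your conjugation-compatible choice of the $g_i$ is exactly the missing hypothesis. Set $G=\begin{bmatrix}g_1&\cdots&g_n\end{bmatrix}$, $V=\begin{bmatrix}v_1&\cdots&v_n\end{bmatrix}$ and let $P_\pi$ be the permutation matrix of $\pi$. Then the argument closes in one line, $\overline{F}=U\overline{G}\,\overline{V}^{-1}=UGP_\pi(VP_\pi)^{-1}=F$, without appealing to Remark~\ref{rem:real_feedback}.

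Your last paragraph is self-contradictory and should be tidied. The choice can be dropped only when $\ker B=\{0\}$: then $Ug_i$ is the unique solution of $Bu=(\lambda_iI-A)v_i$, and compatibility is automatic. In general you could alternatively note that $BF=\begin{bmatrix}(\lambda_1I-A)v_1&\cdots&(\lambda_nI-A)v_n\end{bmatrix}V^{-1}$ is real for any choice, by the same permutation argument. Hence $\operatorname{Re}F$ also satisfies $(A+B\operatorname{Re}F)v_i=\lambda_iv_i$.
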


\begin{proof}
    The claimed equivalence follows directly from Theorem~\ref{thm:moore} and Lemma~\ref{lem:data_kernel}. We show formula~\eqref{eq:ddF}. Let $w_i$, $i=1,\dots, n$ such that
    \begin{equation}
        v_i = X g_i = N(\lambda_i)w_i,\quad -U g_i = M(\lambda_i) w_i
    \end{equation}
    which exist by \eqref{eq:data_kernel} in Lemma~\ref{lem:data_kernel}. Together with~\eqref{eq:F} we find~\eqref{eq:ddF}.
\end{proof}

\subsection{Switched LTI systems: Data-driven feedback rectification}
\label{ssec:data_main}
Now we focus on the switched system~\eqref{eq:switched_sys}. In an offline phase, we collect for each LTI subsystem $(A_q,B_q)$, $q\in\mathcal Q$, input--state data $(u^q, x^q)$, and define data matrices
\begin{equation}
\label{eq:data_matq}
\begin{aligned}
    U^q &= \begin{bmatrix}
        u_0^q&\dots &u_{N_q-2}^q
    \end{bmatrix},\quad X^q = \begin{bmatrix}
        x_0^q & \dots & x_{N_q-2}^q
    \end{bmatrix}\\
    X_+^q &= \begin{bmatrix}
        x_1^q&\dots & x_{N_q-1}^q
    \end{bmatrix}.
\end{aligned}
\end{equation}
We assume that the data is sufficiently informative and satisfies for every $q\in\mathcal Q$
\begin{equation}
    \label{eq:rankq}
    \operatorname{rank}\begin{bmatrix}
        X^q\\ U^q
    \end{bmatrix} = n + m.
\end{equation}
As stated in Lemma~\ref{lem:FL}, provided that $(A^q,B^q)$ is controllable, condition~\eqref{eq:rankq} is guaranteed if the input sequence $u^q$ is persistently exciting of order $n+1$. In the case that the excitation for the LTI subsystems is interrupted before sufficiently long consecutive trajectories can be recorded, e.g.\ by switching events, one may instead collect multiple shorter fragments for each LTI system which are collectively persistently exciting of order $n+1$, see Lemma~\ref{lem:FLfrag}.

Next we characterize the intersection of the images of the matrices $N_q(\lambda_q)$, $q\in\mathcal Q$, in terms of data.
\begin{lemma}
    Consider data matrices $U^q, X^q, X_+^q$ in \eqref{eq:data_matq} such that \eqref{eq:rankq} holds for all $q\in\mathcal Q$. Let $\lambda_1,\dots, \lambda_{\bar q}\in\mathbb C$ and define
\begin{equation}
\label{eq:curlyX}
\!\!\!    \mathcal X(\lambda_1,\dots, \lambda_{\bar q}) \!:=\!
    \begin{bmatrix}
        X_+^1 \!- \!\lambda_1 X^1 & 0 \\
        0&\!\!\!\!\! \mathrm{diag}(X_+^i \!- \!\lambda_i X^i)_{i=2}^{\bar q}\\
        \begin{bmatrix}
            X^1\\ \vdots \\ X^1
        \end{bmatrix} &\!\!\!\!\! - \mathrm{diag}(X^i)_{i=2}^{\bar q}
    \end{bmatrix}\!.
\end{equation}
Then
\begin{equation}
    \bigcap_{q\in\mathcal Q} \operatorname{im}N_q(\lambda_q) = \begin{bmatrix} X^1 &\!\! 0 &\!\! \dots &\!\! 0\end{bmatrix} \operatorname{ker}\mathcal X(\lambda_1,\dots, \lambda_{\bar q}).
\end{equation}
\end{lemma}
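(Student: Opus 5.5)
The plan is to reduce the claim to the single-system description of Lemma~\ref{lem:data_kernel}, applied to each mode separately. By that lemma, for every $q\in\mathcal Q$ we have
\[
\im \begin{bmatrix} N_q(\lambda_q)\\ M_q(\lambda_q)\end{bmatrix} = \begin{bmatrix} X^q\\ -U^q\end{bmatrix}\ker(\lambda_q X^q - X_+^q).
\]
Projecting onto the first $n$ rows gives $\im N_q(\lambda_q) = X^q\ker(X_+^q-\lambda_q X^q)$. Hence the intersection on the left-hand side equals
\[
\Bigl\{ v\in\C^n : \text{for each } q \text{ there is } g_q \text{ with } (X_+^q-\lambda_q X^q)g_q=0 \text{ and } v = X^q g_q \Bigr\}.
\]
This is the set we will match with the kernel of $\mathcal X(\lambda_1,\dots,\lambda_{\bar q})$. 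The statement must be read with the columns of $\mathcal X$ partitioned as $g=(g_1,\dots,g_{\bar q})$, where $g_q$ has $N_q-1$ entries.

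Next I unpack $\mathcal X g=0$ block row by block row. The first $\bar q$ block rows state exactly $(X_+^q-\lambda_q X^q)g_q=0$ for every $q$. The last $\bar q-1$ block rows state $X^1 g_1 - X^q g_q = 0$ for $q=2,\dots,\bar q$. Thus $g\in\ker\mathcal X$ holds if and only if each $g_q\in\ker(X_+^q-\lambda_qX^q)$ and all the vectors $X^q g_q$ coincide with $X^1g_1$.

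Both inclusions then follow. For ``$\supseteq$'': given $g\in\ker\mathcal X$, set $v:=X^1g_1=[X^1\ 0\ \cdots\ 0]g$. Then $v=X^qg_q\in\im N_q(\lambda_q)$ for every $q$, so $v$ lies in the intersection. For ``$\subseteq$'': given $v$ in the intersection, choose for each $q$ a vector $g_q\in\ker(X_+^q-\lambda_qX^q)$ with $X^qg_q=v$; this is possible by the displayed identity from Lemma~\ref{lem:data_kernel}. Stacking these into $g$ yields $\mathcal Xg=0$ and $[X^1\ 0\ \cdots\ 0]g=v$.

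The only delicate step is the projection of Lemma~\ref{lem:data_kernel} onto the first $n$ rows, that is, identifying $\im N_q(\lambda_q)$ with $X^q\ker(\lambda_qX^q-X_+^q)$. This holds because the image of a stacked matrix, projected onto its top block, is the image of that top block, so the identity transfers directly from the lemma. Beyond this, the proof needs some care with the block indexing and with the sign convention $X_+-\lambda X$ versus $\lambda X-X_+$; the sign is harmless, since both matrices have the same kernel.
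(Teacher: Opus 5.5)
Your proposal is correct and is essentially the paper's proof. Like the paper, you apply Lemma~\ref{lem:data_kernel} mode by mode and read off from the block rows of $\mathcal X(\lambda_1,\dots,\lambda_{\bar q})g=0$ that $g_q\in\ker(X_+^q-\lambda_qX^q)$ and $X^1g_1=X^qg_q$; you then prove the two inclusions by setting $v=X^1g_1$ and, conversely, by stacking the $g_q$ supplied by the lemma, and you also make explicit the projection $\operatorname{im}N_q(\lambda_q)=X^q\ker(\lambda_qX^q-X_+^q)$, which the paper uses without comment.
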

\begin{proof}
    Let $v\in \begin{bmatrix}
        X^1 & 0 &\dots & 0
    \end{bmatrix} \ker \mathcal X(\lambda_1,\dots, \lambda_{\bar q})$ and consider a vector $g = \begin{bmatrix}
        g_{1}^\top & \dots & g_{\bar q}^\top
    \end{bmatrix}{}^\top\in \ker \mathcal X(\lambda_1,\dots, \lambda_{\bar q})$ such that $v = \begin{bmatrix}
        X^1 & 0 &\dots & 0
    \end{bmatrix} g$. Then $g_q\in \ker(X_+^q-\lambda_q X^q)$ and $v = X^1 g_1 = X^q g_q$ for all $q\in\mathcal Q$. By Lemma~\ref{lem:data_kernel}, $v = X^q g_q \in \operatorname{im} N_q(\lambda_q)$ for all $q\in\mathcal Q$.

    Let $v\in \bigcap_{q\in\mathcal Q} \operatorname{im}N_q(\lambda_q)$. Then by Lemma~\ref{lem:data_kernel} there exist $g_q\in \ker(X_+^q-\lambda_q X^q)$ such that $v = X^q g_q$. Therefore, $g = \begin{bmatrix}
        g_{1}^\top & \dots & g_{\bar q}^\top
    \end{bmatrix}{}^\top\in \ker \mathcal X(\lambda_1,\dots, \lambda_{\bar q})$ and $v\in \begin{bmatrix}
        X^1 & 0 &\dots & 0
    \end{bmatrix} \ker \mathcal X(\lambda_1,\dots, \lambda_{\bar q})$.
\end{proof}

Note that in \eqref{eq:curlyX}, $X^1$ plays the role of a reference matrix. This choice is arbitrary and any of the matrices $X^1,\dots, X^{\bar q}$ may serve this purpose, leaving the space $\begin{bmatrix} X^1 & 0 & \dots & 0\end{bmatrix} \operatorname{ker}\mathcal X(\lambda_1,\dots, \lambda_{\bar q})$ invariant.

\begin{theorem}
\label{thm:data_rectification}
    Consider data matrices $U^q, X^q, X_+^q$ in \eqref{eq:data_matq} such that \eqref{eq:rankq} holds for all $q\in\mathcal Q$. Given self-conjugate sets $\Lambda_q = \{\lambda_{1}^{(q)},\dots, \lambda_{n}^{(q)}\}$ for $q\in\mathcal Q$, consider the matrices $\mathcal X(\lambda_{i}^{(1)},\dots, \lambda_{i}^{(\bar q)})$ defined as in \eqref{eq:curlyX}. The following conditions are satisfied:
    \begin{itemize}
        \item[\rm (i)] $\{v_1,\dots, v_n\}\subset \C^n$ is linearly independent,
        \item[\rm (ii)] there exists an involution
$\pi:\{1,\ldots,n\}\to\{1,\ldots,n\}$, $\pi(\pi(i))=i$ such that conjugacy condition \eqref{eq:tuple_conjugacy} holds;
        \item[\rm (iii)] $v_i\in \begin{bmatrix}
            X^1 & 0 & \dots & 0
        \end{bmatrix} \ker \mathcal X(\lambda_{i}^{(1)},\dots, \lambda_{i}^{(\bar q)})$,
    \end{itemize}
if and only if for every $q\in\mathcal Q$ there exists $F_q\in\R^{m\times n}$ such that $(A_q+B_qF_q)v_i = \lambda_{i}^{(q)} v_i$ for all $i=1,\dots,n$.

    Let $g_i=\begin{bmatrix}
        g_{i,1}^\top&\dots&g_{i,\bar q}^\top
    \end{bmatrix}{}^\top\in \ker \mathcal X(\lambda_{i}^{(1)},\dots, \lambda_{i}^{(\bar q)})$ such that $v_i=X^1g_{i,1}$ for $i=1,\dots, n$. Then the feedback matrix $F_q\in\mathbb{R}^{m\times n}$, $q\in\mathcal Q$, is given by
    \begin{equation}
    \label{eq:F_data}
    F_q = U^q \begin{bmatrix}
        g_{1,q} & \dots & g_{n,q}
    \end{bmatrix}
        \begin{bmatrix}
            v_1 & \dots & v_n
        \end{bmatrix}^{-1}.
    \end{equation}
\end{theorem}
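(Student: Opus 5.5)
The plan is to reduce the statement to the single-system data-driven version of Moore's theorem, applied to each mode separately, with the preceding lemma on $\bigcap_{q}\operatorname{im}N_q(\lambda_q)$ connecting the intersection condition to the stacked kernel of $\mathcal X$. Fix the vectors $v_1,\dots,v_n$ and the tuples $(\lambda_i^{(1)},\dots,\lambda_i^{(\bar q)})$. By that lemma, condition (iii) is equivalent to
\[
v_i\in\bigcap_{q\in\mathcal Q}\operatorname{im}N_q\bigl(\lambda_i^{(q)}\bigr),
\]
and by Lemma~\ref{lem:data_kernel} this is in turn equivalent to $v_i\in X^q\ker(\lambda_i^{(q)}X^q-X_+^q)$ for every $q\in\mathcal Q$.

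For the ``only if'' direction, assume (i)--(iii). For each fixed $q$, the family $\{v_i\}$ with eigenvalues $\Lambda_q$ satisfies the three conditions of Theorem~\ref{thm:moore_dd}, using the data $(U^q,X^q,X_+^q)$ and the rank condition \eqref{eq:rankq}. Linear independence is (i). The involution condition holds for each $q$ with the same $\pi$ by \eqref{eq:tuple_conjugacy}. Condition (iii) of that theorem is exactly the per-mode membership derived above. Hence there is a real $F_q$ with $(A_q+B_qF_q)v_i=\lambda_i^{(q)}v_i$.

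For the ``if'' direction, assume such $F_q$ exist for all $q$, with the $v_i$ nonzero eigenvectors. Theorem~\ref{thm:moore_dd}, applied mode by mode, yields (i), the per-mode memberships, and for each $q$ an involution $\pi_q$. The per-mode memberships give (iii) by the lemma. The delicate step is obtaining a \emph{single} involution $\pi$ that is valid for all modes simultaneously.

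\begin{itemize}
\item Since $A_q+B_qF_q$ is real, $\overline{v_i}$ is an eigenvector for $\overline{\lambda_i^{(q)}}$ in every mode.
\item Since the $v_j$ form a basis and $\overline{v_i}$ must equal some $v_j$ (this is what the per-mode involution asserts), I would define $\pi(i)=j$ for the index $j$ with $v_j=\overline{v_i}$. This index is unique because the $v_j$ are linearly independent, so in particular pairwise distinct.
\item Then $\overline{\lambda_i^{(q)}}v_j=\overline{(A_q+B_qF_q)v_i}=(A_q+B_qF_q)v_j=\lambda_j^{(q)}v_j$ for every $q$, so $\lambda_{\pi(i)}^{(q)}=\overline{\lambda_i^{(q)}}$.
\item Finally, $\pi$ is an involution because conjugating twice returns $v_i$.
\end{itemize}

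This mode-independent choice of $\pi$ is the point I expect to need the most care, since it uses uniqueness of indices rather than the per-mode involutions $\pi_q$, which could a priori differ.

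For the feedback formula, take $g_i\in\ker\mathcal X(\lambda_i^{(1)},\dots,\lambda_i^{(\bar q)})$. The block structure of $\mathcal X$ gives $g_{i,q}\in\ker(\lambda_i^{(q)}X^q-X_+^q)$ and $X^qg_{i,q}=X^1g_{i,1}=v_i$. Substituting into formula \eqref{eq:ddF} of Theorem~\ref{thm:moore_dd} for mode $q$ yields \eqref{eq:F_data}. Realness of $F_q$ follows as in Remark~\ref{rem:real_feedback}, provided the $g_{i,q}$ are chosen conjugate-consistently, e.g.\ $g_{\pi(i)}=\overline{g_i}$, which is possible since $\mathcal X$ is real up to the conjugated eigenvalues. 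Independently of the choice of $g_i$, Theorem~\ref{thm:moore_dd} already guarantees that the resulting $F_q$ assigns the eigenstructure.
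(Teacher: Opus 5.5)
Your proposal is correct and takes essentially the same route as the paper: both apply Theorem~\ref{thm:moore_dd} (that is, Theorem~\ref{thm:moore} combined with Lemma~\ref{lem:data_kernel}) mode by mode, use the block structure of $\mathcal X$ to turn (iii) into per-mode memberships and to read off \eqref{eq:F_data} from \eqref{eq:ddF}, and obtain realness via Remark~\ref{rem:real_feedback}. You also make explicit two details the paper's one-line proof leaves implicit: the single mode-independent involution (all $\pi_q$ in fact coincide, since linear independence makes the $v_i$ pairwise distinct), and the need to choose the $g_{i,q}$ conjugate-consistently for $F_q$ to be real, which matters when $\ker B_q\neq\{0\}$ because then $U^q g_{i,q}$ is not determined by $v_i$.
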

\begin{proof}
    The equivalence of the rectifiability of $\{(A_q, B_q)\}_{q\in\mathcal Q}$ and the statements (i)--(iii) follows immediately from Theorem~\ref{thm:moore} and Lemma~\ref{lem:data_kernel}. The representation~\eqref{eq:F_data} for the feedback matrices $F_1,\dots, F_{\bar q}$ follows with \eqref{eq:ddF} in Theorem~\ref{thm:moore_dd} together with Lemma~\ref{lem:data_kernel}. That $F_q$ are real-valued for all $q\in\mathcal{Q}$ can be seen as in Remark~\ref{rem:real_feedback}.
\end{proof}

The procedure to calculate the feedback matrices based on data is summarized in Algorithm~\ref{alg:1}.

\begin{algorithm}
\caption{Data-driven feedback rectification}\label{alg:1}
\KwData{$\{(X^q,X_+^q,U^q)\}_{q\in\mathcal Q}$ satisfying \eqref{eq:data_matq}, \eqref{eq:rankq}}
\KwResult{matrices $\{F_q\}_{q\in \mathcal Q}$, CQLF: $x\mapsto \tfrac{1}{2} x^\top P x$}
choose eigenvalue candidates $\{\lambda_i^{(q)}\}$ according to Theorem~\ref{thm:data_rectification} with $\lvert \lambda_i^{(q)}\rvert<1$ \;
find $g_i = \begin{bmatrix}
    g_{i,1}^\top & \dots & g_{i,\bar q}^\top
\end{bmatrix}{}^\top \in \ker\mathcal X(\lambda_i^{(1)},\dots, \lambda_i^{(\bar q)})$ such that
$V=\begin{bmatrix}
    X^1 g_{1,1} & \dots & X^1 g_{n,1}
\end{bmatrix}$ is invertible\;
$F_q \gets U^q \begin{bmatrix}
    g_{1,q} & \dots & g_{n,q}
\end{bmatrix} V^{-1}$ \;
$P \gets V^{-*} V^{-1}$\;
\end{algorithm}

\begin{remark}
Although we assume the switching signal to be known, the proposed scheme can be combined with the data-driven mode detection approach in~\cite{eising2024data}. If the CQLF value ceases to decrease, this may indicate an unobserved mode switch. Hence, mode detection can be triggered to identify the active subsystem and select the corresponding feedback gain. This extension requires additional data-informativity assumptions and a detection procedure that is sufficiently fast relative to the system dynamics. Under these conditions, the approach can also be applied when the switching signal is not known a priori.
\end{remark}

\subsection{Nonlinear program formulation}
The computation of kernel elements in Algorithm~\ref{alg:1} comes with the additional constraint that the assembled matrix $V$ is invertible, which is difficult to enforce by standard linear-algebraic techniques. To address this issue, we propose a heuristic approach based on solving the following nonlinear optimization problem:
\begin{subequations}
\label{eq:opt}
    \begin{equation}
        \label{eq:opta}
        \operatorname*{minimize}_{\{\Lambda,\ V,\ g\}}\ \lVert VV^\top -I\rVert_\mathrm{F}^2\quad \text{s.t.}
    \end{equation}
    \begin{align}
        \label{eq:optb}
        \Lambda &= \{\lambda_i^{(q)}\}_{i=1,\dots, n,\ q\in\mathcal Q} \subset [-\alpha, \alpha]\\
        V &= \begin{bmatrix}
            v_1 & \dots & v_n
        \end{bmatrix}\in\mathbb R^{n\times n},\\
        g &= \begin{bmatrix}
            g_{1,1} &\!\!\dots\!\!& g_{1,\bar q} &\!\! \dots\!\! & g_{n,1} &\! \!\dots\!\! & g_{n,\bar q}
        \end{bmatrix}\in\mathbb R^{(N-1)\bar q n}\\
        \label{eq:opte}
        0&=c_{i,q}(\Lambda,V,g):=\begin{bmatrix}
            v_i\\
            \lambda_i^{(q)} v_i
        \end{bmatrix} - \begin{bmatrix}
            X^q\\
            X_+^q
        \end{bmatrix}g_{i,q}.
    \end{align}
\end{subequations}

Observe that the objective function in \eqref{eq:opta} is quartic, while constraint~\eqref{eq:opte} is quadratic, rendering the optimization problem~\eqref{eq:opt} nonconvex. The user-defined parameter $\alpha\in(0,1)$ in \eqref{eq:opt} ensures that the constructed feedback stabilizes the closed-loop system. A small value of $\lVert VV^\top -I\rVert_\mathrm{F}$ promotes invertibility of $V$ and drives it towards orthogonality. Indeed,
\begin{equation}
\label{eq:bd}
    1>\lVert VV^\top -I\rVert_\mathrm{F}^2 = \sum_{i=1}^n (\sigma_i^2 - 1)^2
\end{equation} implies that all singular values $\sigma_i$ of $V$ are positive and $V$ is invertible. Note that the bound~\eqref{eq:bd} is conservative and invertibility of $V$ might hold for considerably larger values of $\lVert VV^\top -I\rVert_\mathrm{F}^2$.

To solve the constrained optimization problem~\eqref{eq:opt}, we apply the augmented Lagrangian method \cite{nocedal2006numerical}. Given $\mu\in\R^{2n^2\bar q}$ and $\nu>0$ the augmented Lagrangian reads
\begin{equation*}
\begin{split}
    &\mathcal L(\Lambda,\ V,\ g,\mu;\nu):=\\
    &\frac{1}{2}\lVert VV^\top -I\rVert_\mathrm{F}^2 + \mu^\top c(\Lambda,\ V,\ g) + \frac{\nu}{2} \lVert c(\Lambda,\ V,\ g)\rVert_2^2,
\end{split}
\end{equation*}
where
\begin{equation}
    c(\Lambda,\ V,\ g) := \begin{bmatrix}
        c_{1,1}(\Lambda,V, g)&
        \ldots &
        c_{n,\bar q}(\Lambda, V, g)
    \end{bmatrix}^\top.
\end{equation}
Then the minimization problem
\begin{equation}
    \operatorname*{minimize}_{\{\Lambda,\ V,\ g\}}\ \mathcal L(\Lambda,V,g,\mu_k;\nu_k)
\end{equation}
subject to the bound constraint~\eqref{eq:optb} is solved successively via the limited-memory BFGS algorithm, see, e.g., \cite{nocedal2006numerical}. The penalty factors $\nu_k>0$ form a nondecreasing sequence chosen adaptively with $\nu_k\rightarrow\infty$ and the Lagrangian multipliers $\mu_k$ are updated appropriately between iterations. The solution of each subproblem is used as initialization for the next iteration, such that the resulting sequence of iterates approximates a (local) minimizer of nonlinear program~\eqref{eq:opt}.

\section{Example}
\label{sec:example}
We demonstrate the functionality of the proposed algorithm with two examples.
\subsection{Example with randomly chosen matrix entries}
We consider a switched system with $\bar q=4$ switching modes, input dimension $m=2$ and state dimension $n=6$. The subsystems are randomly generated while ensuring feedback rectifiability. All numerical values following are rounded to two significant digits. The system matrices are given by
\begin{align*}
    A_{1} &= \left[\begin{smallmatrix}
    0.06 & -0.69 & 0.43 & 0.21 & -0.16 & 0.24 \\
    0.28 & -0.71 & -0.35 & -0.64 & -0.35 & 0.24 \\
    0.20 & -0.01 & -0.36 & -0.18 & 1.26 & -0.70 \\
    0.71 & -0.07 & 0.11 & 0.37 & -0.49 & 0.19 \\
    -0.02 & 0.09 & -0.15 & -0.08 & -0.44 & -0.58 \\
    -0.25 & -0.08 & -0.09 & -0.21 & 0.06 & 0.80
\end{smallmatrix}\right]\\
    A_{2} &= \left[\begin{smallmatrix}
    0.64 & -0.34 & 0.61 & 0.11 & -0.04 & 0.06 \\
    0.49 & -0.47 & -0.21 & -0.45 & -0.19 & -0.07 \\
    0.34 & -0.11 & 0.16 & -0.32 & 0.53 & -0.41 \\
    0.20 & -0.40 & -0.48 & 0.20 & -0.21 & 0.45 \\
    0.16 & 0.08 & -0.24 & -0.47 & -0.47 & -0.33 \\
    0.04 & 0 & -0.17 & -0.12 & 0.17 & 0.77
\end{smallmatrix}\right]\\
    A_{3} &= \left[\begin{smallmatrix}
    0.58 & -0.19 & 0.32 & 0.62 & 0.03 & -0.12 \\
    0.67 & -0.25 & -0.68 & -0.49 & 0.09 & -0.06 \\
    0.52 & -0.15 & -0.30 & -0.09 & 0.83 & -0.33 \\
    0.64 & -0.09 & -0.42 & -0.10 & -0.24 & 0.51 \\
    0.18 & -0.08 & -0.16 & -0.11 & -0.55 & -0.33 \\
    0.02 & 0.03 & -0.03 & 0 & 0.04 & 0.70
\end{smallmatrix}\right]\\
 A_{4} &= \left[\begin{smallmatrix}
    1.07 & -0.65 & 0.61 & 0.58 & 0.16 & 0.08 \\
    0.24 & -0.47 & -0.50 & -0.56 & -0.09 & -0.02 \\
    -0.23 & 0.03 & -0.17 & 0 & 0.49 & -0.55 \\
    0.03 & -0.52 & 0.12 & -0.12 & -0.66 & 0.48 \\
    -0.50 & 0.10 & -0.09 & -0.05 & -0.86 & -0.53 \\
    -0.60 & 0.08 & -0.10 & -0.08 & -0.16 & 0.65
\end{smallmatrix}\right]
\end{align*}
\begin{equation*}
\begin{aligned}
 B_{1} &= \left[\begin{smallmatrix}
    -0.35 & 0.10 \\
    -0.38 & -0.14 \\
    0.46 & 0.50 \\
    0.27 & -0.19 \\
    0.19 & 0.21 \\
    -0.11 & 0.14
\end{smallmatrix}\right]&
    B_{2} &= \left[\begin{smallmatrix}
    -0.11 & -0.42 \\
    0.26 & -0.07 \\
    -0.03 & -0.35 \\
    -0.32 & 0.41 \\
    -0.46 & -0.27 \\
    -0.21 & -0.01
\end{smallmatrix}\right]\\
    B_{3} &= \left[\begin{smallmatrix}
    -0.16 & 0.34 \\
    -0.02 & 0.27 \\
    0.35 & 0 \\
    0.41 & 0.09 \\
    0.35 & -0.16 \\
    0 & 0.03
\end{smallmatrix}\right]&
    B_{4} &= \left[\begin{smallmatrix}
    -0.49 & -0.22 \\
    0.20 & 0.13 \\
    0.48 & 0.12 \\
    -0.02 & 0.26 \\
    0.40 & 0.22 \\
    0.46 & 0.28
\end{smallmatrix}\right].
\end{aligned}
\end{equation*}
Since we consider a data-driven setting, each subsystem is simulated to produce an input--state trajectory $(u^q,x^q)$, from which the corresponding data matrices $U^q,X^q,X_+^q$ as in \eqref{eq:data_matq} are constructed. Given these data matrices, the intersection matrix in \eqref{eq:curlyX} can be computed. By applying Algorithm \ref{alg:1} we find a switched state feedback with matrices
\begin{align*}
    F_{1} &= \left[\begin{smallmatrix}
    -0.98 & -0.58 & 0.05 & -0.67 & -0.67 & 0.67 \\
    0.98 & 0.11 & 0.68 & 0.98 & -0.72 & -0.10
\end{smallmatrix}\right] \\
    F_{2} &= \left[\begin{smallmatrix}
    0.17 & -0.01 & -0.83 & -0.51 & 0.69 & 0.28 \\
    0.30 & 0.34 & 0.53 & -0.88 & -0.27 & 0.08
\end{smallmatrix}\right] \\
    F_{3} &= \left[\begin{smallmatrix}
    -0.79 & -0.20 & 0.83 & 0.26 & -0.64 & -0.32 \\
    -0.62 & -0.95 & 0.85 & -0.10 & -0.38 & 0.20
\end{smallmatrix}\right] \\
    F_{4} &= \left[\begin{smallmatrix}
    0.73 & -0.77 & 0.46 & -0.12 & 0.11 & 0.31 \\
    0.94 & 0.97 & -0.42 & 0.47 & 0.50 & -0.31
\end{smallmatrix}\right]
\end{align*}
The common eigenvectors assigned form the columns of
\[
V = \left[\begin{smallmatrix}
    -0.08 & -0.10 & -0.15 & -0.76 & -0.09 & 0.30 \\
    -0.53 & -0.63 & -0.15 & -0.15 & -0.09 & 0.06 \\
    0.25 & -0.50 & -0.10 & -0.02 & 0.16 & 0.56 \\
    -0.51 & 0.10 & 0.09 & -0.40 & -0.14 & -0.33 \\
    -0.45 & 0.38 & 0 & 0.07 & 0 & 0.17 \\
    0.01 & -0.01 & 0 & -0.20 & 0 & -0.52
\end{smallmatrix}\right]
\]
and the corresponding Lyapunov matrix reads
\[
P = \left[\begin{smallmatrix}
    8.95 & 7.04 & -15.31 & -17.18 & 0 & 0 \\
    7.04 & 9.48 & -15.90 & -17.84 & 0 & 0 \\
    -15.31 & -15.90 & 41.00 & 27.76 & 14.67 & 19.96 \\
    -17.18 & -17.84 & 27.76 & 47.41 & -13.07 & -17.79 \\
    0 & 0 & 14.67 & -13.07 & 26.71 & 33.39 \\
    0 & 0 & 19.96 & -17.79 & 33.39 & 47.63
\end{smallmatrix}\right].
\]
The eigenvalues of the closed-loop subsystems can be seen in Tab.~\ref{tab:ev_rand}.
\begin{table}
\caption{Eigenvalues of the closed-loop matrices $A_q+B_q F_q$ of the random switched system.}
\label{tab:ev_rand}
\centering
\begin{tabular}{rrrrrrr}
\toprule
$q$ & $\lambda_1$ & $\lambda_2$ & $\lambda_3$ & $\lambda_4$ & $\lambda_5$ & $\lambda_6$ \\
\midrule
$1$ & $-0.7$ & $-0.7$ & $0.0$ & $0.0$ & $0.7$ & $0.7$ \\
$2$ & $-0.7$ & $-0.7$ & $0.0$ & $0.0$ & $0.7$ & $0.7$ \\
$3$ & $-0.7$ & $-0.7$ & $0.0$ & $0.0$ & $0.7$ & $0.7$ \\
$4$ & $-0.7$ & $-0.7$ & $0.0$ & $0.0$ & $0.7$ & $0.7$ \\
\bottomrule
\end{tabular}
\end{table}
Figure~\ref{fig:rand} depicts the Euclidean norm along a closed-loop state trajectory together with the values of the Lyapunov function for known random switching events.
\begin{figure}[htbp!]
    \centering
    \includegraphics[width=\linewidth]{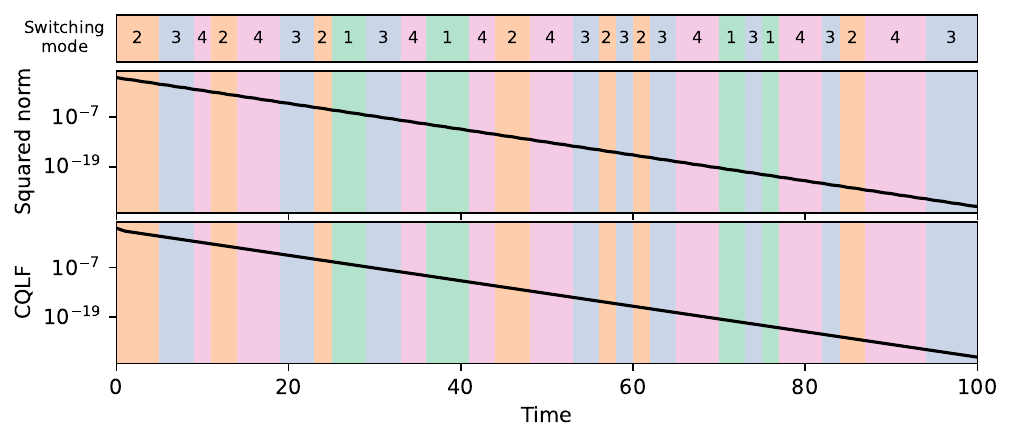}
    \caption{Random switched system. Euclidean norm and CQLF along state trajectory, under random switching signal.}
    \label{fig:rand}
\end{figure}

\subsection{Unmanned Aerial Vehicle (UAV)}
In \cite{honecker2025development} an Unmanned Aerial Vehicle (UAV) is modeled with $\bar q = 2$ switching modes, input dimension $m=5$ and state dimension $n=7$. Mode one is the nominal mode and mode two models a defective mode which models mainly a loss of yaw controllability. This example is chosen as  in \cite{honecker2025development} it is demonstrated to be difficult to solve with \cite{HoneckerGWTR24} and restrictions on the eigenvalue pairs apply. All numerical values following are rounded to three significant digits. The system matrices of the individual subsystems are given by
\begin{align*}
    A_{1} &= \left[\begin{smallmatrix}
    0.998 & -0.003 & -0.012 & -0.490 & 0 & 0 & 0 \\
    -0.005 & 0.004 & 0 & 0.002 & 0 & 0 & 0 \\
    -0.038 & -0.241 & 0.966 & 0.009 & 0 & 0 & 0 \\
    -0.001 & -0.010 & 0.049 & 1.000 & 0 & 0 & 0 \\
    0 & 0 & 0 & 0 & 0.957 & 0 & 0 \\
    0 & 0 & 0 & 0 & -0.137 & 0.450 & 0.226 \\
    0 & 0 & 0 & 0 & 0.190 & 0.017 & 0.605
\end{smallmatrix}\right] \\
    A_{2} &= \left[\begin{smallmatrix}
    0.998 & -0.005 & -0.012 & -0.490 & 0 & 0 & 0 \\
    -0.005 & 0.004 & 0 & 0.002 & 0 & 0 & 0 \\
    -0.043 & 0 & 0.966 & 0.011 & 0 & 0 & 0 \\
    -0.001 & 0 & 0.049 & 1.000 & 0 & 0 & 0 \\
    0 & 0 & 0 & 0 & 0.957 & 0 & 0 \\
    0 & 0 & 0 & 0 & -0.137 & 0.450 & 0.226 \\
    0 & 0 & 0 & 0 & 0.190 & 0.017 & 0.605
\end{smallmatrix}\right] \\
   B_{1} &= \left[\begin{smallmatrix}
    0.005 & 0 & 0 & 0.002 & -0.001 \\
    0.104 & 0 & 0 & 0 & 0.009 \\
    -1.871 & 0 & 0 & 0.047 & 0.293 \\
    -0.047 & 0 & 0 & 0.001 & 0.007 \\
    0 & 0 & -0.008 & 0 & 0 \\
    0 & 4.136 & 5.258 & 0 & 0 \\
    0 & 0.110 & -0.802 & 0 & 0
\end{smallmatrix}\right] \\
    B_{2} &= \left[\begin{smallmatrix}
    0.005 & 0 & 0 & 0.003 & -0.001 \\
    0.104 & 0 & 0 & 0 & 0 \\
    -1.757 & 0 & 0 & 0 & 0.303 \\
    -0.044 & 0 & 0 & 0 & 0.008 \\
    0.054 & 0 & 0 & 0 & 0 \\
    -0.005 & 4.136 & 0 & 0 & 0 \\
    0.006 & 0.110 & 0 & 0 & 0
\end{smallmatrix}\right].
\end{align*}

The calculated feedback matrices are given by
\begin{align*}
    F_{1} &= \left[\begin{smallmatrix}
    -0.929 & 3.879 & 1.066 & 7.476 & -91.866 & -1.393 & -0.085 \\
    -0.003 & -0.045 & -0.039 & -0.211 & -80.031 & -0.138 & -0.443 \\
    0.001 & 0.002 & 0 & -0.002 & 55.243 & -0.002 & 0.331 \\
    -94.054 & 4.166 & 43.979 & 460.423 & -710.992 & -10.396 & -0.818 \\
    9.642 & 25.039 & -6.277 & -54.316 & -491.222 & -7.511 & -0.433
\end{smallmatrix}\right] \\
    F_{2} &= \left[\begin{smallmatrix}
    0 & 0.002 & 0 & 0 & -6.527 & 0 & -0.001 \\
    -0.001 & -0.001 & -0.045 & -0.244 & -9.981 & -0.114 & -0.065 \\
    0 & 0 & 0 & 0 & 0 & 0 & 0 \\
    -93.900 & -1.599 & 42.906 & 454.823 & -379.027 & -5.876 & -0.578 \\
    0.440 & 0.004 & -5.793 & -26.959 & -59.410 & -0.327 & -0.030
\end{smallmatrix}\right]
\end{align*}
with eigenvector matrix
\[
V = \left[\begin{smallmatrix}
    0.004 & -0.035 & 0.680 & 0 & 0.046 & 0.738 & -0.003 \\
    0.007 & -0.234 & -0.003 & -0.004 & 0 & -0.004 & -0.972 \\
    0 & 0.135 & -0.484 & 0 & -0.706 & 0.498 & -0.034 \\
    0 & -0.003 & 0.101 & 0 & 0.002 & -0.085 & 0.001 \\
    -0.006 & 0 & 0 & 0.004 & 0 & 0 & 0 \\
    0.365 & -0.839 & -0.018 & -0.303 & -0.167 & -0.014 & 0.206 \\
    0.648 & 0.007 & -0.002 & 0.762 & -0.002 & -0.002 & -0.002
\end{smallmatrix}\right]
\]
and Lyapunov matrix given by
\[
P = \left[\begin{smallmatrix}
    0.996 & -0.006 & -0.074 & -0.777 & 0.727 & 0.004 & 0.002 \\
    -0.006 & 1.002 & -0.003 & -0.009 & 5.356 & 0.018 & 0.005 \\
    -0.074 & -0.003 & 1.990 & 10.623 & -1.987 & -0.028 & -0.007 \\
    -0.777 & -0.009 & 10.623 & 114.044 & -20.669 & -0.300 & -0.041 \\
    0.727 & 5.356 & -1.987 & -20.669 & 25288.313 & 85.901 & 30.171 \\
    0.004 & 0.018 & -0.028 & -0.300 & 85.901 & 1.292 & 0.103 \\
    0.002 & 0.005 & -0.007 & -0.041 & 30.171 & 0.103 & 1.036
\end{smallmatrix}\right].
\]
The eigenvalues of the corresponding closed-loop subsystems are shown in Tab.~\ref{tab:ev_uav}. Fig.~\ref{fig:aero} depicts the Euclidean norm and the Lyapunov function along some state trajectory, given a known random switching signal.

\begin{figure}[htbp!]
    \centering
    \includegraphics[width=\linewidth]{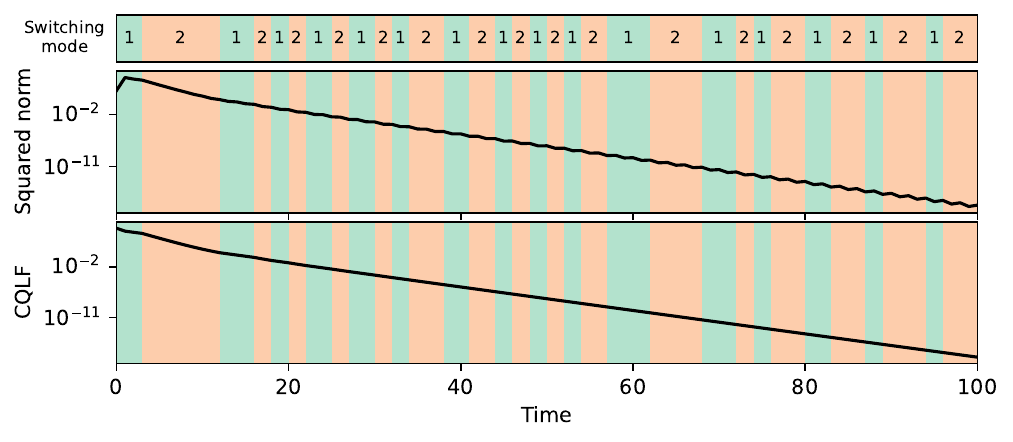}
    \caption{UAV. Euclidean norm and CQLF along state trajectory, under random switching signal.}
    \label{fig:aero}
\end{figure}
\begin{table} \caption{Numerical eigenvalues of the closed-loop matrices $A_q+B_qF_q$ for the UAV switched system.} \label{tab:ev_uav} \centering {\setlength{\tabcolsep}{3pt} \begin{tabular}{@{}rrrrrrrr@{}} \toprule $q$ & $\lambda_1$ & $\lambda_2$ & $\lambda_3$ & $\lambda_4$ & $\lambda_5$ & $\lambda_6$ & $\lambda_7$ \\ \midrule $1$ & $-0.800$ & $-0.155$ & $0.031$ & $0.694$ & $0.744$ & $0.787$ & $0.800$ \\ $2$ & $-0.800$ & $0.004$ & $0.005$ & $0.591$ & $0.609$ & $0.744$ & $0.787$ \\ \bottomrule \end{tabular} } \end{table}

\section{Conclusion and outlook}
\label{sec:conclusion}
In this paper, we present a data-driven method for the stabilization of switched linear systems. Using input--state data collected during an offline phase, we derive state feedbacks that rectify the eigenstructure of switched linear systems.
As a consequence, this allows us to obtain a common quadratic Lyapunov function purely based on the data.
Future work will include the incorporation of additive noise in the data using data-informativity and including a data-driven mode detection for the stabilization with an unknown switching signal.

\section*{Acknowledgments}
Philipp Schmitz gratefully acknowledges funding by the Carl Zeiss Foundation (VERNEDCT, Project No.~2021-10-003). Hannes Gernandt was funded by the Deutsche Forschungsgemeinschaft (DFG, German Research Foundation), Project-ID~531152215, CRC~1701. Karl Worthmann gratefully acknowledges support by the DFG, Project-ID~554600805.

\IfFileExists{refs.bib}{%
  \bibliographystyle{IEEEtran}
  \bibliography{refs}
}{%
  \section*{References}
  \noindent\textbf{Source asset required before submission.}
  The bibliography database \texttt{refs.bib} was not included with the uploaded manuscript.
  Add the original file next to \texttt{main.tex}; the bibliography will then be generated automatically.
}

\appendix
\section{Proof of Proposition~\ref{prop:lyapunov}}
\label{sec:prop_lyapunov}
\begin{proof}
Since $T^{-1}A_qT$, $q\in\mathcal Q$, is upper triangular, we write $T^{-1}A_qT=\Lambda_q+N_q$ for $\Lambda_q$ diagonal containing the eigenvalues and
$N_q$ is strictly upper triangular satisfying
\[
\|\Lambda_q\|_2=\rho(A_q)<1.
\]
Now introduce, for $\varepsilon\in (0,1]$, a diagonal scaling
$D_\varepsilon:=\operatorname{diag}(\varepsilon^k)_{k=0}^{n-1}\in\mathbb{R}^{n\times n}$.

Then
\[
\widetilde U_q(\varepsilon):=D_\varepsilon^{-1}(T^{-1}A_qT)D_\varepsilon
=
\Lambda_q+D_\varepsilon^{-1}N_qD_\varepsilon.
\]
Since $N_q$ is strictly upper triangular, for $i<j$,
\[
\bigl(D_\varepsilon^{-1}N_qD_\varepsilon\bigr)_{ij}
=
\varepsilon^{\,j-i}(N_q)_{ij}.
\]
Thus, every off-diagonal entry is multiplied by a factor
$\varepsilon^{j-i}$, hence by at most $\varepsilon$.
Therefore,
\[
\|D_\varepsilon^{-1}N_qD_\varepsilon\|_F
\le \varepsilon\,\|N_q\|_F.
\]
Let
$c:=\max_{q\in\mathcal Q} \|N_q\|_F$. Then, for every $q\in\mathcal Q$,
\[
\|D_\varepsilon^{-1}N_qD_\varepsilon\|_2
\le
\|D_\varepsilon^{-1}N_qD_\varepsilon\|_F
\le \varepsilon c.
\]
Hence
\[
\|\widetilde U_q(\varepsilon)\|_2
\le
\|\Lambda_q\|_2+\|D_\varepsilon^{-1}N_qD_\varepsilon\|_2
\le \max_{q\in\mathcal Q}\rho(A_q)+\varepsilon c.
\]
Choose a sufficiently small $\varepsilon\in(0,1]$ such that
$\alpha:=\max_{q\in\mathcal Q}\rho(A_q)+\varepsilon c<1$. Then
\[
\|\widetilde U_q(\varepsilon)\|_2\le \alpha<1
\qquad\forall q\in\mathcal Q.
\]
Now define
$S:=TD_\varepsilon$, then
\[
S^{-1}A_qS
=
D_\varepsilon^{-1}T^{-1}A_qTD_\varepsilon
=
\widetilde U_q(\varepsilon),
\]
implies
\[
\|S^{-1}A_qS\|_2\le \alpha<1
\qquad\forall q\in\mathcal Q.
\]
Equivalently,

\[
A_q^\top P A_q \preceq \alpha^2 P,
\qquad
P:=S^{-*}S^{-1}
=
T^{-*}D_\varepsilon^{-\top}D_\varepsilon^{-1}T^{-1}.
\]
Thus, $\mathcal{V}(x)=x^\top Px$ is a common quadratic Lyapunov function. The proof of the exponential stability is straightforward. If the matrices are jointly diagonalizable, then $N_q=0$ for all $q\in\mathcal{Q}$ and therefore we obtain $c=0$ in the above estimates and we can choose $\varepsilon=1$ to fulfill the estimates.
\end{proof}
\end{document}